\documentclass{amsart}

\usepackage[T1]{fontenc}
\usepackage[utf8]{inputenc}
\usepackage{amsmath,amsthm,amsfonts,amscd,amssymb,eucal,latexsym,mathrsfs}
\usepackage{stmaryrd}
\usepackage{enumerate}
\usepackage{hyperref}
\usepackage[all]{xy}
\usepackage{etoolbox}
\usepackage{amssymb}
\usepackage{amsfonts}
\usepackage{graphicx}
\usepackage{mathtools}

\usepackage{tikz,tikz-cd}
\usetikzlibrary{shapes.geometric}
\usetikzlibrary{arrows}
\usetikzlibrary{calc}

\usetikzlibrary{positioning}
\usepackage{float}
\usepackage{MnSymbol}
\usetikzlibrary{matrix}
\usepackage{tkz-euclide}

\newcommand{\toc}{\tableofcontents}

\theoremstyle{plain}
\newtheorem{theorem}{Theorem}[section]
\newtheorem*{theorem*}{Theorem}

\newtheorem{corollary}[theorem]{Corollary}
\newtheorem*{corollary*}{Corollary}

\newtheorem{lemma}[theorem]{Lemma}

\newtheorem{proposition}[theorem]{Proposition}

\theoremstyle{definition}
\newtheorem{remark}[theorem]{Remark}

\newtheorem{question}[theorem]{Question}

\newtheorem{definition}[theorem]{Definition}
\newtheorem*{definition*}{Definition}

\usetikzlibrary{calc,graphs}
\usepackage{xcolor}
\usetikzlibrary{arrows,decorations.pathmorphing}

\newcommand{\G}{\Gamma}

\newcommand{\s}{\mathbf{s}}
\renewcommand{\r}{\mathbf{r}}
\renewcommand{\d}{\mathrm{d}}

\newcommand{\ZI}{\mathbb{Z}}

\DeclareMathOperator{\ran}{\mathrm{ran}}

\DeclareMathOperator{\impl}{\Rightarrow}

\DeclareMathOperator{\surj}{\twoheadrightarrow}

\newcommand{\Id}{\mathrm{Id}}

\newcommand{\ts}{\textsection}

\renewcommand{\r}{{\mathbf r}}
\renewcommand{\l}{{\mathbf l}}

\newcommand{\td}{\bigtriangledown}

\title{Quasi-periodicity and almost equality classes}
\author{Mikael Pichot}

\begin{document}

\maketitle
\setcounter{tocdepth}{1}

\begin{abstract}
We discuss the almost stability theorem of Dicks and Dunwoody in the context of probability measure preserving equivalence relations.
\end{abstract}

\section{Introduction}

The almost stability theorem of Dicks and Dunwoody \cite{dd} states that any $G$-stable almost equality class of subsets in a $G$-set with finite stabilizers is the vertex set of a $G$-tree having finite edge stabilizers.    Here $G$ is a group and two subsets of a set $E$ are said to be almost equal if they differ by a finite set. Almost equality is an equivalence relation on the subsets of $E$ whose classes are called almost equality classes.

In the present paper (not for publication) we describe a research project based on a conjectural extension of this theorem to  probability measure preserving (pmp) equivalence relations. The project was put on hold,  particularly due to its length and  somewhat technical nature, but recent AI breakthroughs suggest that these obstacles could soon be overcome with the help of an LLM.

Our initial motivation for studying the almost stability theorem was the finite index treeability problem for equivalence relations. Namely, it is not known if a pmp equivalence relation that contains a finite index treeable subrelation must itself be treeable. 
We recall that an equivalence relation is  treeable if each equivalence class can be measurably identified with the vertex set of some tree (see \ts\ref{S - notation} for a more precise definition).

Here we prove the following result which is at the origin of this  project.

\begin{theorem}\label{T - finite index stable almost equality classes intro}
An equivalence relation that contains a treeable subrelation of finite index admits a quasi-periodic stable almost equality class over a quasi-periodic set.
\end{theorem}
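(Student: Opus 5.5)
The plan is to transport the classical construction of an almost equality class from a $G$-tree to the measured setting, using the treeing of the finite index subrelation. Let $R$ be the pmp equivalence relation on $(X,\mu)$ and $S\subset R$ a treeable subrelation of finite index $n$, with a Borel treeing $T$ of $S$. Each $R$-class $[x]_R$ splits into at most $n$ classes of $S$, and each carries a tree from $T$. The set $E$ is built fiberwise over $[x]_R$. Take as $E_x$ the disjoint union of the oriented edges of the trees $T|_{[y]_S}$, for $y$ running over representatives of the $S$-classes inside $[x]_R$. Alternatively, take $E_x=[x]_R\times\{1,\dots,n\}$ using a Borel choice of coset representatives $\phi_1,\dots,\phi_n$ of $S$ in $R$, which exists by the Feldman--Moore theorem. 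In both cases the assignment $x\mapsto E_x$ is $R$-invariant up to canonical bijections, and this is what "quasi-periodic set" should mean: a Borel field of countable sets over $X$ with an $R$-equivariant structure. The point is that $E$ is locally finite over $R$: each point of $X$ contributes finitely many elements of $E$.

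The almost equality class is defined by mimicking Dicks--Dunwoody's converse direction, in which a tree yields the class of "half-trees". Fix an end or a base point. Concretely, for $x\in X$, for each $S$-class $C\subset [x]_R$ choose the vertex of $T|_C$ given by the canonical point $x$ if $x\in C$. Let $A_x\subset E_x$ be the set of edges of the trees pointing towards this choice. In each tree this is the standard orientation towards a root, and in the other $S$-classes we orient towards $\phi_i(x)$. Replacing $x$ by $y\in[x]_R$ changes the root in each tree from one vertex to another. Since the trees are connected, only the edges on the finitely many geodesic segments between the old and new roots change orientation. Hence $A_x$ and $A_y$ are almost equal under the canonical identification $E_x=E_y$. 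So the assignment $x\mapsto [A_x]$ is an $R$-stable almost equality class, and it is quasi-periodic because it is given by a Borel rule in $x$. The Borel measurability of the subsets follows from the measurability of the treeing and of the coset selectors.

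Three checks remain, and I expect them to be the main obstacle. The first is that the definitions of "quasi-periodic", "stable" and "almost equality class over a quasi-periodic set" in the paper match the above. The second is that the choices of coset representatives do not spoil almost equality when they are permuted along $R$. Moving $x$ to $y$ permutes the $S$-classes in $[x]_R$, and for each class the old root $\phi_i(x)$ and the new root $\phi_{\sigma(i)}(y)$ are in the same $S$-class. They are therefore joined by a finite geodesic, which gives finitely many flipped edges, finitely many classes, and a finite union. The third, if the definition demands it, is the analogue of finite stabilizers: the $R$-action on $E$ must have finite fibers over $X$, which holds since the index is finite and the trees have countable but locally finite fiberwise structure. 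I would first try to verify that the fiber bound is exactly $n$ times the vertex degree, and if degrees are unbounded, use vertex-based sets instead of edges, i.e. $E_x=[x]_R\times\{1,\dots,n\}$ with $A_x$ given by the vertices on the root side of a fixed edge.
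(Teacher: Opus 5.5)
Your construction coincides with the paper's. With the canonical identifications along $R$, your field $x\mapsto E_x$ of edges of the treeing over the $S$-classes in $[x]_R$ is the induced set $R\otimes_S ET$. Your $A_x$ (rooted at $x$, and at $\phi_i(x)$ in the other $S$-classes) is $R\otimes_S v_{0,1}$. Your finite-geodesic argument for $R$-stability is the content of Lemma~\ref{L - induction stable ae class}.

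The gap is in the step that carries the theorem: ``it is quasi-periodic because it is given by a Borel rule in $x$''. Quasi-periodic does not mean ``Borel field with an $R$-equivariant structure''; that is merely an $R$-set. It means the orbit partition on the total space admits a Borel fundamental domain. The space that must be quasi-periodic is the class itself, $\widetilde A_x=\{B\subset E_x : B\triangle A_x\ \text{finite}\}$. Under your canonical action the orbit of $(x,B)$ is $\{(x',B): x'\in[x]_R\}$. So a fundamental domain is a Borel rule choosing, for each such $B$, a point of $[x]_R$ that does not depend on which $x$ in the class is used.

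Borelness and stability cannot supply this. Over the same $E$, the class of $\emptyset$ (the finite subsets) is Borel and $R$-stable. It contains the invariant section $x\mapsto\emptyset$, whose orbits are whole $R$-classes, so by Lemma~\ref{L - qp invariant subspace} it is quasi-periodic only if $R$ is smooth. A stable class therefore exists trivially; the whole content is quasi-periodicity. Likewise, the analogue of the finite-stabilizer hypothesis is quasi-periodicity of $E$, not finiteness of its fibres, and your $E_x$ are infinite. $E$ is quasi-periodic (e.g.\ $\{(x,e): e\ \text{starts at}\ x\}$ is a fundamental domain), but this must be checked, not built into a definition.

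The missing idea is to use the tree to attach to each element of the class, equivariantly, a finite nonempty set of vertices. The paper does this in three steps.
\begin{enumerate}
\item The restriction $\rho\colon\widetilde{R\otimes_S V}\to\widetilde V$ to the summand $1\otimes ET$ is an $S$-map. By the lemma that a space mapping equivariantly to a quasi-periodic space is quasi-periodic, together with Lemma~\ref{L - qp finite index}, it suffices to show $\widetilde V$ is $S$-quasi-periodic.
\item Write $\tilde v=v+a$ with $|a^x|\le N$. Every $(x,y)$ in the stabilizer $S_{\tilde v}$ satisfies $|(x,y)v^y\triangle v^x|\le 2N$. So the images of the root of $v$ under $S_{\tilde v}$ form a bounded set of vertices, whose circumcenter gives an $S_{\tilde v}$-invariant vertex section. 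This yields an $S$-map $S\tilde v\to VT$ into a quasi-periodic space, so $S\tilde v$ is quasi-periodic.
\item Lemma~\ref{L - qp sections} assembles the countably many orbits.
\end{enumerate}

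In your canonical picture a direct repair is also possible. For $B$ in the class and each $S$-class $D\subset[x]_R$, let $B_D$ be the part of $B$ in the tree on $D$ and $P_w$ the edges pointing to $w$. The set of $w\in D$ minimizing $|B_D\triangle P_w|$ is nonempty and finite. Indeed, a minimizer $w\neq r$ (with $r$ the root of $A_x$ in $D$) must be the far endpoint of one of the finitely many edges where $B_D$ and $P_r$ differ. Otherwise its predecessor $u$ on $[r,w]$ does strictly better: $P_u$ and $P_w$ differ only on $\{u,w\}$, where $B_D$, $P_r$ and $P_u$ all agree. Take the least point, for a fixed Borel linear order on $X$, of the union of these finitely many finite sets. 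It depends only on $B$, and the pairs $(x,B)$ with $x$ equal to that point form a fundamental domain. This step, not Borel measurability, is where treeability is actually used.
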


All terminology will be explained in \ts\ref{S - notation}. The quasi-periodicity condition   refers to the existence of a fundamental domain in a given $R$-set (see \ts  \ref{S - qp spaces}). 

The  following question is at the heart of this project. 

\begin{question}[Almost stability for pmp equivalence relations]\label{Q - ast} Let $R$ be a pmp equivalence relation and $E$ be a quasi-periodic countable $R$-set. Is  
any $R$-stable almost equality class over $E$ the vertex set of an $R$-tree with quasi-periodic edge set? 
\end{question}

A positive answer to this question combined with Theorem \ref{T - finite index stable almost equality classes intro}  provides: 

\begin{corollary}\label{C - main cor}
Suppose that the almost stability theorem for pmp equivalence relations holds. Then every pmp equivalence relation admitting a finite index treeable subrelation is treeable.  
\end{corollary}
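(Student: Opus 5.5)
The corollary follows by chaining Theorem \ref{T - finite index stable almost equality classes intro} with the assumed positive answer to Question \ref{Q - ast}, and then checking that an $R$-tree with quasi-periodic edge set, whose vertex set is an almost equality class over a quasi-periodic countable $R$-set, yields a treeing of $R$. Let $R$ be a pmp equivalence relation containing a treeable subrelation $S$ of finite index. Theorem \ref{T - finite index stable almost equality classes intro} gives a quasi-periodic countable $R$-set $E$ and an $R$-stable, quasi-periodic almost equality class $\mathcal{A}$ of subsets of $E$. The hypothesis (almost stability for pmp relations) then gives an $R$-tree $T$ with vertex set $\mathcal{A}$ and quasi-periodic edge set.

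The remaining task is to pass from the $R$-tree $T$ to a treeing of $R$. This is the measured analogue of the classical fact that a group acting on a tree with finite vertex and edge stabilizers is virtually free. Here "finite stabilizers" is replaced by quasi-periodicity, meaning the existence of a measurable fundamental domain for the vertex and edge fields. The plan has three steps.

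\begin{enumerate}[(1)]
\item Use quasi-periodicity of the vertex set $\mathcal{A}$ and of the edge set to choose measurable fundamental domains. Over each point $x$ of the base we obtain a countable tree $T_x$, and the equivalence class $[x]$ maps to the vertices of $T_x$ with finite fibres, equivariantly in $x$. The fibres are finite because stabilizers in a quasi-periodic set are finite, in the sense that the class $[x]$ acts with finite-to-one orbit maps.
\item Apply the standard construction for pmp relations with an action on a field of trees: a relation acting on a measurable field of trees with quasi-periodic (``finite stabilizer'') vertex set is treeable. Concretely, pick a measurable section $x\mapsto v(x)$ into the vertex field. Replace each vertex by the finite set of points of $[x]$ mapping to it, joined by a finite tree. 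Replace each edge of $T_x$ by a single edge between chosen representatives of its endpoints' fibres, again chosen measurably using the fundamental domain on edges. The resulting graphing is a tree in each class, because gluing finite trees along the edges of a tree produces a tree.
\item Verify that all choices can be made measurably and $R$-equivariantly. This uses Lusin--Novikov selection in the standard Borel setting.
\end{enumerate}

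The main obstacle I expect is the equivariance in step (2). The representatives and the internal finite trees must be chosen so that the graphing is defined on $R$ itself, as a subset of $R$, rather than on a particular class. If a canonical choice is unavailable, I would first try to pass to the finite-index setting. In that case it suffices to show that $R$ restricted to a fundamental domain $Y\subset X$ for the vertex fibres is treeable, and then to invoke the known fact that treeability is stable under passing between $R$ and its restrictions to complete sections. On $Y$ the map from classes to vertices becomes injective, so $T$ directly gives a graphing of $R|_Y$ that is a tree in each class.
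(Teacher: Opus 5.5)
Your first half is exactly the paper's route. Theorem \ref{T - finite index stable almost equality classes intro}, combined with the assumed positive answer to Question \ref{Q - ast}, gives an $R$-tree $T$ whose vertex set and edge set are both quasi-periodic, i.e.\ a quasi-periodic tree. The paper then finishes in one line with Proposition \ref{P - treeable qp}, a reformulation of Gaboriau.

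The gap is in your hand-made version of that last step. Fix a single section $v$ and consider the map $[x]\to VT^x$, $y\mapsto (x,y)v^y$. It does have finite fibres in the pmp case, since the smooth subrelation $R_v$ is finite a.e. But its image is only the trace of the one orbit $Rv$ on $T^x$. Every vertex lying in a different $R$-orbit has empty fibre, so there is nothing to "replace" it by, and edges of $T^x$ between such vertices get no representatives. Here the vertex set is an almost equality class, which in general consists of infinitely many $R$-orbits. So what you build on $[x]$ is at best the restriction of $T^x$ to one orbit, which is a forest, not a tree.

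Your fallback has the same defect. Restricting to $Y\subset X$ can make $[x]\cap Y\to VT^x$ injective, but not surjective, so $T$ does not "directly" give a treeing of $R_{|Y}$. Building a tree on a proper subset of the vertices of a tree, measurably and equivariantly, is essentially the content of the theorem that restrictions of treeable relations are treeable. You have not yet set up a treeable relation to which that theorem could be applied.

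The fix is to enlarge the base rather than shrink it.

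\begin{enumerate}
\item Let $V_0\subset VT$ be a fundamental domain and $\pi\colon V_0\to X$ the projection.
\item For each $x$, sending $w\in VT^x$ to the unique point of $V_0$ in $Rw$ is a bijection $VT^x\to V_0\cap \pi^{-1}([x])$.
\item For $x'$ in the same class, the two bijections differ by the graph isomorphism $(x',x)\colon T^x\to T^{x'}$. Hence $T$ transports to a Borel treeing of the relation $\{(u,w)\in V_0\times V_0 : (\pi(u),\pi(w))\in R\}$. No choices are needed, and no finiteness of stabilizers (hence no pmp) is used.
\item Restrict this relation to the image of a Borel section of $\pi$ (Lusin--Novikov). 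The result is a copy of $R$ restricted to $\pi(V_0)$, which is a complete section because every $VT^x$ is nonempty.
\item Treeability passes to restrictions and back from complete sections, so $R$ is treeable.
\end{enumerate}

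This is the argument behind Proposition \ref{P - treeable qp} and the remark following it. With it, your concerns in step (2) about equivariantly choosing representatives and finite trees disappear.
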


Important recent works on related subjects, in particular in the context of the Stallings structure theorem, which is one of the applications of the almost stability theorem,  include \cite{T,JS,Poulain,I}. These works will be easier to discuss in a follow up note after the notions of quasi-periodic tree sets and (non-necessarily quasi-periodic) structure trees are established.

I first learned about the finite index treeability problem during my PhD thesis. It is straightforward to verify that quasi-periodicity behaves well under finite index extensions, but of course trees on equivalence classes do not lift in an obvious way.  I only became aware of  the almost stability theorem  many years later, when working on finite index subgroups in  Stallings triangles associated with Moebius--Kantor groups \cite{Stallings}. One immediately learns in \cite{dd} that trees can be endowed  in  almost equality classes, and the dual (arrow reversing) nature of this procedure. This, combined with a general understanding of quasi-periodic spaces, leads  to Theorem \ref{T - finite index stable almost equality classes intro} and its corollary. Unfortunately, these results  do not solve the finite index problem yet; it remains to answer Question  \ref{Q - ast}, which is not as trivial as it may sound. 

The proof of the almost stability theorem in \cite{dd} is divided into two chapters. The first chapter, namely \cite[Chap. II]{dd}, establishes a general and well-known existence theorem, of generating tree sets in the sense of Dunwoody, in Boolean rings of cuts that have an upper  bound on their capacity; the proof of this theorem provided in \cite{dd}  is in fact straightforward  to adapt to equivalence relations. It is a maximality argument and indeed a standard sort of arguments for equivalence relations and von Neumann algebras, and a proof for equivalence relations along the lines of \cite{dd} can be obtained with  minimal modifications. In fact, by using more recent arguments which are due to Dunwoody, one can make the result for equivalence relations  follow in an essentially obvious way from the known statements in the  group case.   

The almost stability theorem itself is proven in \cite[Chap.\ III]{dd}. The argument is long and difficult, and the proof does not  generalize easily to equivalence relations. A first remark, for instance, is the importance in the theorem of employing both the left and the right actions. This can be circumvented for equivalence relations by adding pmp assumption, which explains our restriction to pmp equivalence relations in Question \ref{Q - ast} (this restriction is superfluous for Theorem \ref{T - finite index stable almost equality classes intro}). Other issues involving finite generation can be resolved by adding an integrability assumption on the graphs on almost equality classes that arise naturally in the proofs, but the overall argument remains intricate, and certainly very much time consuming to write.

Around that time, a talented McGill student, Jonah Saks, expressed interest in working on this research topic. He started an MSc thesis under my supervision on the almost stability theorem and its application to finite index treeability; one of the basic questions we considered was the construction of a tree having as vertex set \emph{some} invariant subspace of a---general, not necessarily quasi-periodic---almost equality class. The project was initially conceived as a long term endeavour that would extend into a PhD thesis. Unfortunately, Jonah eventually decided to not continue onto a PhD after the completion of his MSc thesis. I believe, nevertheless, that the large number of talks  that Jonah gave on the subject, in various  McGill seminars between September 2021 and April 2023, were quite influential.

I plan to  update this note after gaining some familiarity with LLMs. No use of AI was made in this project, neither in the conception nor in the composition of the present document, but exciting recent announcements \cite{ai}  seem to suggest that current LLMs (as of August 2026)  excel at exploring vast technical mathematical landscapes---which would obviously be very helpful to answer Q.\ \ref{Q - ast}.   

Theorem \ref{T - finite index stable almost equality classes intro} and Cor.\ \ref{C - main cor} are established in \ts\ref{S - treeable finite index}, after a brief discussion of quasi-periodicity in \ts\ref{S - qp spaces}.  

\toc

\section{Notation}\label{S - notation}

Let $X$ be a standard Borel space. We recall that a Borel equivalence relation on $X$ is a Borel subset of $X\times X$ which is an equivalence relation.  The probability measure preserving (pmp) condition with respect a probability measure $\mu$ on $X$ requires that
\[
\int_X |K^x|\d\mu(x)=\int_X|K_y|\d\mu(y)
\]
for every Borel subset $K\subset R$ where $K^x=\{(x,y)\in K\}$ and $K_y=\{(x,y)\in K\}$ for any $x,y\in X$. 

In what follows all equivalence relations, sets, maps, etc., are implicitly assumed to be Borel.  We view an equivalence relation $R$ as a small category (groupoid) with object set $X$ and arrow set $R$.

If $X$ and $Y$ are (standard Borel) spaces and $\pi\colon Y\to X$ is a (Borel) map, we write $\G(Y)$ for the set of sections of $\pi$, i.e., (Borel) maps $s\colon X\to Y$ such that $\pi\circ s=\Id$. If $A\subset X$ is a  (Borel) subset, we write $\G((Y))$ for the set of partial sections, i.e., all $s\colon A\to Y$ such that $\pi\circ s=\Id_A$.

Let $\Sigma$ be a ``category of spaces''. In the present paper, $\Sigma$ will refer in most situation to the category of countable graphs. More generally, we mean by this one of the following: category of  countable graphs, countable sets, countable simplicial complexes, or standard Borel spaces. Other categories are suitable for the purpose of defining what a ``quasi-periodic object'' in $\Sigma$ is, but will not be useful for the purpose of this paper.  

By countable graph, we mean a pair $Y=(V,E)$ of countable sets endowed with two maps $s,r\colon E\to V^2$. A graph morphism means the usual thing. We write $VY=V$ for the vertex set and $EY=E$ for the edge set of $Y$.  A \emph{tree} is a connected acyclic countable graph.

We recall that a subset $K$ of $R$ is called a \emph{graphing} of $R$ if the connected components of the graph with vertex set $X$ and edge set $K$ are the classes of $R$. A graphing is called a \emph{treeing} if the induced (connected) graph on every equivalence class is a tree.

We use the standard notion of a Borel functor $Y$ of $R$ into $\Sigma$ ({see e.g., \cite[\ts2]{Ramsay}, or \cite[\ts4]{Connes}}). Thus, a functor $Y$ is Borel if the total space $Y(X):=\bigsqcup_{x\in X} Y(x)$ is endowed with a standard Borel structure such that the standard projection map $\pi\colon Y(X)\surj X$, the natural bijections $\pi^{-1}(x)\simeq Y(x)$ for every $x\in X$, and the action map:
\begin{align*}
R*_X Y(X) &\to Y(X)\\
 ((x,y),v)&\mapsto Y(x,y)v
\end{align*}
are all Borel maps. We will generally identify the functor $Y$ with $Y(X)$ by abuse of notation and  in particular, call $Y$ itself (instead of $Y(X)$), an $R$-space in the category $\Sigma$. 

 If  $E$ is a countable set, we write $(E,\ZI_2)$ for the power set of $E$ (we broadly follow the notation in \cite{dd}). We often view $(E,\ZI_2)$ as the set of functions on $E$ taking values in $\ZI_2:=\ZI/2\ZI$.  If $u,v\in (E,\ZI_2)$, we write $u\td v:=\{e\in E: u(e)\neq v(e)\}$ and say that $u$ and $v$ are \emph{almost equal} if this set is finite.  Almost equality is an equivalence relation on $(E,\ZI_2)$.  If $E$ is a field of countable sets over $X$ (defined by a Borel map $E\to X$  with countable fibres), almost equality is an equivalence relation on $(E^x,\ZI_2)$,  which defines a (Borel) field of (Borel) equivalence relations over the (Borel) field $(E,\ZI_2)$. We say that a Borel subfield $V\subset (E,\ZI_2)$ sits in an almost equality class if any two elements in $V^x$ are almost equal for every $x\in X$. We say that two sections $u, v\in \G(E,\ZI_2)$ sit in a uniform almost equality class if 
\[
\|u\td v\|_\infty:=\sup_{x\in X} |u^x\td v^x| < \infty.
\]
Uniform almost equality is equivalence relation on $\G(E,\ZI_2)$.

\section{Quasi-periodic spaces}\label{S - qp spaces}

Let $X$ be a (standard Borel) space, $R$ be a (Borel) equivalence relation on $X$, and $\Sigma$ be a category of spaces. In most cases, $\Sigma$ will be the category of countable graphs.  This section, which reproduces some unpublished ideas from the author's PhD thesis, is completely standard at the technical level. The goal is to introduce a notion of quasi-periodicity for spaces, for example, for graphs. This point of view has been quite useful in our study the almost stability theorem, and we take the opportunity to explain these ideas here. We will globally follow the overview \cite{qp}. Some lemmas established below will only be used in follow up notes on almost stability.

\begin{definition}\label{D - qp space}
A \emph{quasi-periodic space} (or \emph{quasi-periodic $R$-space}) in $\Sigma$ a functor $Y$ of  $R$ into $\Sigma$ which admits a fundamental domain.  
\end{definition}

By fundamental domain, we mean a (Borel) subset of $Y(X)$ which meet every $R$-orbit precisely once. Here the orbit of an element $v\in Y(X)^y$ is the set $Rv:=\{Y(x,y)v : (x,y)\in R\}$. The set of orbits form an equivalence relation on $Y(X)$ called the \emph{orbit partition} of $Y$. It is denoted $R_Y$. 
Thus, if $\Sigma$ is the category of countable graphs,  we are given a Borel field of graphs  $Y(X)$, endowed with a Borel action of $R$, for which there exists a Borel set $Y_0\subset VY(X)\sqcup EY(X)$, called a fundamental domain, which intersects every vertex $R$-orbit and every edge $R$-orbit precisely once.   We call quasi-periodic tree  a quasi-periodic graph which is a field of trees.

We recall that an equivalence relation is said to be \emph{smooth} if it admits a fundamental domain. Quasi-periodicity corresponds, by definition, to smoothness of the orbit equivalence relation $R_Y$ on $Y(X)$. The reader may well replace, in the interest of keeping the terminology concise, the word ``quasi-periodic'' by the word ``smooth'' in all that follows, provided that rudimentary maintenance is applied to the statements. Thus, for instance, it is true that a direct limit of quasi-periodic spaces is quasi-periodic, but a direct limit of smooth equivalence relations needs not be smooth. The main difference is merely conceptual in nature, in that quasi-periodicity takes place in a fixed functor category, while smoothness has no requirement that the ambient ``concept of quasi-periodicity'' (in the sense defined below) remains fixed; at the technical level, most statements are straightforward and follow easily from the standard textbooks \cite{K, sri}, even though they may not be explicitly stated in there. In fact, Theorem \ref{T - finite index stable almost equality classes intro} is one of the first proof of quasi-periodicity which does not follow immediately from first principles (it relies on  treeability).

We let $\Sigma^R_{\mathrm{qp}}$ denote the category of quasi-periodic spaces in $\Sigma$.  Morphisms in $\Sigma^R_{\mathrm{qp}}$ are defined in the usual way viewing $\Sigma^R_{\mathrm{qp}}$ as a subcategory of the functor category $\Sigma^R$ of all $R$-spaces in $\Sigma$.

\begin{definition}  
A morphism of a quasi-periodic space $Y$ to a quasi-periodic space $Z$ is a (Borel) natural transformation $Y\impl Z$. 
\end{definition}

 Explicitly, we are given, for every $x\in X$, a morphism
\[
\eta^x\colon Y(x)\to Z(x)
\]
which is Borel in the sense that the map 
\begin{align*}
\eta:=\int_{X} \eta^x \colon Y(X)&\to Z(X)\\
Y^x\ni p&\mapsto \eta^x(p)\in Z^x
\end{align*}
is Borel, and equivariant in the sense that 
\[
\eta^{\l(r)}(Y(r)p)=Z(r)\eta^{\s(r)}(p),
\]
for every $(r,p)\in R*_XY(X)$; the latter equation corresponds to commutativity of the  diagram
\begin{center}
\begin{tikzpicture}
    \matrix (m) [
      matrix of math nodes,
      row sep=1.5cm,
      column sep=1.5cm,
    ] {
          Y(\r(r))  & Y(\l(r)) \\
 	Z(\r(r))	&  Z(\l(r))  \\
  };
    \path[->]        (m-1-1) edge node[left,above] {$Y(r)$}(m-1-2)
                     (m-1-2) edge node[right] {$\eta_{\r(r)}$} (m-2-2)
    	   (m-1-1) edge node[left] {$\eta_{\l(r)}$}(m-2-1)
    	   (m-2-1) edge node[below] {$Z(r)$}(m-2-2);
\end{tikzpicture}
\end{center}
for every $(r,e)\in R*_XY(X)$. In the context of pmp equivalence relations, negligible sets are discarded as required.

We call inclusion (resp.\ surjection) of quasi-periodic spaces a natural transformation $\eta\colon Y\impl Z$ such that $\eta^x$ is almost surely injective (resp.\ surjective).

\begin{lemma}\label{L - qp invariant subspace}
Suppose $Z$ is quasi-periodic space and $Y\subset Z$ is an invariant subspace. Then $Y$ is a quasi-periodic space. 
\end{lemma}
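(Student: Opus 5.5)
The natural plan is to produce a fundamental domain for $Y$ by intersecting one for $Z$ with $Y(X)$ and transporting the intersection back along the action. Let $Z_0\subset Z(X)$ be a fundamental domain for $Z$, i.e.\ a Borel set meeting every orbit of $R_Z$ exactly once. Here $Y\subset Z$ invariant means $Y(X)\subset Z(X)$ is a Borel subset, fibrewise a subobject in $\Sigma$, with $Z(r)Y(\s(r))\subset Y(\l(r))$ for all $r\in R$. The action of $R$ on $Y$ is the restriction of the action on $Z$. Consequently, the orbit partition $R_Y$ is exactly the restriction of $R_Z$ to $Y(X)$, and $Y(X)$ is an $R_Z$-invariant Borel subset of $Z(X)$. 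In the graph case one argues separately for vertices and edges, and the same reasoning applies verbatim to each.

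Invariance then gives the fundamental domain directly. Set $Y_0:=Z_0\cap Y(X)$. This set is Borel. Each $R_Y$-orbit is an $R_Z$-orbit contained in $Y(X)$, so it meets $Z_0$ in exactly one point, and that point lies in $Y(X)$. Hence $Y_0$ meets every $R_Y$-orbit exactly once and is a fundamental domain for $Y$. It remains to check that $Y$ is a Borel functor in the sense of \ts\ref{S - notation}. The total space $Y(X)$ is a Borel subset of a standard Borel space, hence standard. The projection and action maps are restrictions of Borel maps, and $R*_XY(X)$ is a Borel subset of $R*_XZ(X)$.

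If the statement is meant more loosely, with $Y(X)$ only invariant up to a null set (pmp context), then one first passes to the saturation or the invariant kernel. Concretely, replace $Y(X)$ by $\{v: Rv\subset Y(X)\}$, which is Borel because the $R$-orbits are countable. One writes $R$ as a countable union of graphs of Borel partial bijections (Feldman--Moore, as in \cite{K}), so this set is a countable intersection of Borel sets. It differs from $Y(X)$ only over an $R$-invariant null set of $X$, which is discarded.

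I expect the only real obstacle to be this measurability bookkeeping: ensuring that "invariant subspace" yields a Borel, genuinely invariant total space. The main tool there is the Feldman--Moore decomposition; once that is settled, the intersection argument is immediate.
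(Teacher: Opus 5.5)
Your proposal is correct and is essentially the paper's argument. You take $Y_0=Z_0\cap Y(X)$ and use invariance to see that every $R$-orbit in $Y$ is a full $R$-orbit of $Z$, so it meets $Z_0$ exactly once and that point lies in $Y$. The Borel-structure check and the Feldman--Moore remark about invariance up to null sets are correct extras that the paper leaves implicit.
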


\begin{proof}
Let  $Z_0\subset Z$ be a fundamental domain. We claim that $Y_0:=Z_0\cap Y$ is a fundamental domain of $Y$.  Indeed, if $p\in Z$ is a point, there exists a unique $r\in R$ and a unique $q\in Z_0$ such that $p=rq$. Since $Y$ is invariant, $q\in Y_0$. Clearly, if $p,q\in Y_0$ verify $p=rq$ for some $r\in R$ then $p=q$ since $p,q\in Z_0$ and $Z_0$ is a fundamental domain.  
\end{proof}

\begin{lemma}\label{L - qp stable by union and intersection} Suppose  $Y$, $Z$ are quasi-periodic subspaces of an $R$-space $V$. Then $Y\cap Z$ and $Y\cup Z$ are quasi-periodic subspaces of $V$. 
\end{lemma}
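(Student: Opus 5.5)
The intersection case follows directly from Lemma \ref{L - qp invariant subspace}. Since $Y$ and $Z$ are invariant subspaces of $V$, the intersection $Y\cap Z$ is an invariant subspace of $Y$. As $Y$ is quasi-periodic, Lemma \ref{L - qp invariant subspace} shows that $Y\cap Z$ is quasi-periodic. Concretely, if $Y_0$ is a fundamental domain for $Y$, then $Y_0\cap Z$ is a fundamental domain for $Y\cap Z$. Here we use that an invariant subspace meets each orbit either entirely or not at all, and that $Y\cap Z$ is Borel.

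For the union, I will build a fundamental domain by hand, splitting $Y\cup Z$ into the two disjoint invariant Borel pieces $Y$ and $Z\setminus Y$.
\begin{enumerate}
\item The piece $Z\setminus Y$ is invariant, because $Z$ and $Y$ are. Since it is an invariant subspace of the quasi-periodic space $Z$, Lemma \ref{L - qp invariant subspace} gives a fundamental domain $W_0:=Z_0\setminus Y$, where $Z_0$ is a fundamental domain of $Z$.
\item Set $U_0:=Y_0\sqcup W_0$. Every $R$-orbit of $Y\cup Z$ lies entirely in $Y$ or entirely in $Z\setminus Y$, by invariance. Hence it meets exactly one of $Y_0$, $W_0$, and meets it exactly once. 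So $U_0$ is a Borel fundamental domain of $Y\cup Z$.
\end{enumerate}

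When $\Sigma$ is the category of graphs, I run the same argument separately on vertices and on edges. The only point to check is that $Y\cup Z$ is again a subspace, that is, a subgraph. This holds because a union of subgraphs is a subgraph: the endpoints of an edge of $Y$ lie in $VY$, and similarly for $Z$. Invariance of the union is clear.

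I expect no real obstacle. The only points needing care are Borel measurability, which holds since all sets involved are Borel by the standing convention, and the invariance of $Z\setminus Y$, which is what makes the orbits of the union split cleanly between the two pieces.
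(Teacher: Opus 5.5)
Your proof is correct and follows essentially the paper's route. The intersection comes from Lemma \ref{L - qp invariant subspace}, and the union is split into disjoint invariant pieces whose fundamental domains are glued; you use the two pieces $Y\sqcup(Z\setminus Y)$ and check the gluing explicitly, whereas the paper uses the three pieces $Y\cap Z$, $Y\setminus Z$, $Z\setminus Y$ and cites the ``obvious'' fact that a disjoint sum of quasi-periodic spaces is quasi-periodic.
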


\begin{proof} Clearly, $Y\cap Z$ is an $R$-stable subspace of $Y$, and is therefore a quasi-periodic countable set by Lemma \ref{L - qp invariant subspace}. Similarly, $Y\setminus Z$ and $Z\setminus Y$ are quasi-periodic. 
The lemma then follows by the obvious fact that a disjoint sum of quasi-periodic spaces is again quasi-periodic.
\end{proof}

\begin{lemma}\label{L - qp direct product}
Suppose $Y$ is an $R$-space and $Z$ is a quasi-periodic space. Then $Y\times_X Z$ is a quasi-periodic space. 
\end{lemma}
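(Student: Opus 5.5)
The plan is to write down a fundamental domain for the diagonal action on $Y\times_X Z$, using a fundamental domain $Z_0\subset Z$ for $Z$. Recall that $Y\times_X Z$ is the field over $X$ whose fibre at $x$ is $Y(x)\times Z(x)$, with $R$ acting diagonally:
\[
(x,y)\cdot(p,q)=\bigl(Y(x,y)p,\,Z(x,y)q\bigr).
\]
I would set
\[
W_0:=(Y\times_X Z)\cap \pi_Z^{-1}(Z_0)=\{(p,q)\in Y\times_X Z : q\in Z_0\}.
\]
This is Borel, being the preimage of a Borel set under the Borel projection $\pi_Z$. In the graph case one applies this to vertices and edges separately.

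\emph{Every orbit meets $W_0$.} Let $(p,q)$ lie in the fibre over $y$. Since $Z_0$ is a fundamental domain, there is $r=(x,y)\in R$ with $Z(r)q\in Z_0$. Then $r\cdot(p,q)=(Y(r)p,Z(r)q)$ lies in $W_0$.

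\emph{Each orbit meets $W_0$ at most once.} Suppose $(p,q)$ and $r\cdot(p,q)$ both lie in $W_0$. Then $q$ and $Z(r)q$ are both in $Z_0$ and in the same $R_Z$-orbit, so $Z(r)q=q$. In particular the source and target of $r$ coincide, because $Z(r)q$ lies in the fibre over $\l(r)$ and $q$ over $\s(r)$. Since $R$ is an equivalence relation, $r$ is the unit arrow $(x,x)$, and by functoriality it acts as the identity. Hence $r\cdot(p,q)=(p,q)$.

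Here I use that an equivalence relation has trivial isotropy, so in the quasi-periodic setting points of $Z$ have trivial stabilizers. This freeness is the only real point; it is what lets $W_0$ contain many elements $(p,q)$ with the same $q$ without violating uniqueness.

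The main obstacle I expect is bookkeeping rather than mathematics. One must check that $Y\times_X Z$ carries a standard Borel structure making it a Borel functor, namely the fibred product of standard Borel spaces with the diagonal action, which is Borel. One must also handle the category $\Sigma$: for graphs, the fibred product of graphs is taken componentwise on vertex and edge sets, and the argument above is applied to each. Negligible sets are discarded as usual.
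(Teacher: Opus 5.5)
Your proof is correct and matches the paper's: the paper also takes $Y\times_X Z_0$ as the fundamental domain, obtains existence by moving the $Z$-coordinate into $Z_0$, and obtains uniqueness because two points of $Z_0$ in the same orbit force $r=1$. Your observation that $Z(r)q=q$ implies $\l(r)=\s(r)$, since the fibres of $Z(X)$ are disjoint and an equivalence relation has trivial isotropy, spells out the step the paper leaves implicit when it writes ``it follows that $r=1$''.
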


Here $Y\times_X Z$ refers to the fibered product (pullback) over $X$.

\begin{proof}
Let $Z_0$ be a fundamental domain for $Z$. We claim that $F=Y\times_XZ_0$ is a fundamental domain for $Y\times_X Z$. Indeed, if $(p,q)\in Y\times_X Z$, there exists a unique $r\in R$ and $q_0\in Z_0$ such that $q=rq_0$. Let $p_0=r^{-1}p\in Y$. Then $(p_0,q_0)\in F$ and $(p,q)=r(p_0,q_0)$. If   $(p,q)=r(p',q')$ and $(p,q),(p',q')\in F$, then $p=rp'$ and $q=rq'$. Since $q,q'\in Z_0$, it follows that $r=1$. Therefore, $p=p'$ and $q=q'$.  
\end{proof}

\begin{lemma}\label{L - pair qp spaces}
Suppose $Y\subset Z$ is a nested pair of quasi-periodic spaces. Then there exist a fundamental set $Y_0$ of $Y$ and a fundamental set $Z_0$ of $Z$ such that $Y_0\subset Z_0$.
\end{lemma}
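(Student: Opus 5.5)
We can take $Y_0:=Z_0'\cap Y$ for an arbitrary fundamental domain $Z_0'$ of $Z$; this is a fundamental domain of $Y$ by the proof of Lemma \ref{L - qp invariant subspace}. The work lies in extending it to a fundamental domain $Z_0$ of $Z$ while keeping $Y_0\subset Z_0$. Since $Y$ is an invariant subspace of $Z$, the orbit partition $R_Z$ restricts on $Y$ to $R_Y$. Moreover, $Z\setminus Y$ is again an invariant subspace of $Z$, and it is $R_Z$-saturated in $Z(X)$: an orbit of $Z$ lies either entirely in $Y$ or entirely in $Z\setminus Y$.

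The plan is therefore to split $Z$ along this invariant decomposition. First, fix any fundamental domain $Y_0$ of $Y$; we may start from an arbitrary one, since we will not modify it. Second, apply Lemma \ref{L - qp invariant subspace} to the invariant subspace $Z\setminus Y\subset Z$ to get a fundamental domain $W_0$ of $Z\setminus Y$, for instance $W_0=Z_0'\cap (Z\setminus Y)$. Third, set $Z_0:=Y_0\sqcup W_0$, which is Borel as a union of two Borel sets.

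It remains to check that $Z_0$ meets every $R$-orbit of $Z$ exactly once. Let $p\in Z(X)$. If $p\in Y$, its orbit $Rp$ is contained in $Y$, so it meets $Y_0$ exactly once and does not meet $W_0\subset Z\setminus Y$. If $p\notin Y$, invariance of $Y$ gives $Rp\subset Z\setminus Y$, so the orbit meets $W_0$ exactly once and misses $Y_0$. This is the same disjoint-sum argument as in Lemma \ref{L - qp stable by union and intersection}.

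The only delicate point is the set-up rather than the argument. We must make sure that ``$Y\subset Z$'' is read as an invariant Borel subspace, identified via the inclusion natural transformation, so that $Z\setminus Y$ is Borel and invariant. In the pmp setting, where the inclusion is only almost surely injective, we first discard an invariant null set, which is harmless since null sets can be saturated by $R$. For graphs, the same reasoning is carried out separately on vertex orbits and on edge orbits, since a fundamental domain is defined separately on each.
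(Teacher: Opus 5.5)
Your proof is correct and is essentially the paper's argument: fix an arbitrary fundamental domain $Y_0$ of $Y$ and adjoin $Z_0'\setminus Y$, which is a fundamental domain of the invariant complement $Z\setminus Y$. Your opening choice $Y_0:=Z_0'\cap Y$ already satisfies $Y_0\subset Z_0'$, so that case needs no extension at all; the version with an arbitrary $Y_0$ is the stronger form that is actually needed when the lemma is applied inductively in Lemma \ref{L - qp spaces increasing unions}.
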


\begin{proof} Let $Y_0$ be any fundamental set of $Y$ and $Z_0$ be any fundamental set of $Z$. Since $Z\setminus Y$ is an invariant Borel subspace of $Z$, it is quasi-periodic by Lemma \ref{L - qp invariant subspace}, which furthermore admits $Z_0\setminus Y$ as a fundamental domain. It follows that the set $Y_0\cup (Z_0\setminus Y)$ is a fundamental set of $Z$ which contains $Y_0$.  
\end{proof}

For our next lemma we note that the direct limit of a nested countable family of measurable $R$-bundles in $\Sigma$ is a measurable $R$-bundle in $\Sigma$. 

\begin{lemma}\label{L - qp spaces increasing unions} Countable increasing unions and decreasing intersections of quasi-periodic spaces are quasi-periodic.
\end{lemma}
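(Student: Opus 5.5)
Let $Y_0\subset Y_1\subset Y_2\subset\cdots$ be a nested sequence of quasi-periodic subspaces of an ambient $R$-space. By the remark preceding the lemma, the union $Y=\bigcup_n Y_n$ is again an $R$-space. For decreasing intersections $Y=\bigcap_n Y_n$ nothing needs to be built: $Y$ is an invariant Borel subspace of the quasi-periodic space $Y_0$, so it is quasi-periodic by Lemma \ref{L - qp invariant subspace}. All the work is in the increasing case.

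For increasing unions I will build compatible fundamental domains inductively and take their union. Start with any fundamental domain $F_0$ of $Y_0$. Given a fundamental domain $F_n$ of $Y_n$, apply the argument of Lemma \ref{L - pair qp spaces} to the pair $Y_n\subset Y_{n+1}$. That is, choose any fundamental domain $Z_{n+1}$ of $Y_{n+1}$ and set
\[
F_{n+1}:=F_n\cup (Z_{n+1}\setminus Y_n).
\]
Here $Y_{n+1}\setminus Y_n$ is invariant, so by Lemma \ref{L - qp invariant subspace} it has $Z_{n+1}\setminus Y_n$ as a fundamental domain. Hence $F_{n+1}$ is a fundamental domain of $Y_{n+1}$ satisfying $F_{n+1}\cap Y_n=F_n$. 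The point of keeping the earlier domain $F_n$ exactly is to get this stronger property, not mere containment.

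Now set $F:=\bigcup_n F_n$, a countable union of Borel sets and hence Borel. I will check that $F$ meets every orbit exactly once.
\begin{itemize}
\item \emph{Existence.} Any $p\in Y$ lies in some $Y_n$. Since $F_n$ is a fundamental domain of $Y_n$, the point $p$ is $R$-equivalent to a point of $F_n\subset F$.
\item \emph{Uniqueness.} Suppose $q\in F_m$ and $q'\in F_k$ lie in the same orbit, with $m\le k$. Then $q\in F_m\subset F_k$, so both points lie in $F_k$ and $q=q'$.
\end{itemize}

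The only step needing care is Borelness of the union as an $R$-space, i.e.\ that the direct limit structure is standard Borel with Borel action maps. This is exactly the remark stated just before the lemma, so I will take it as given.
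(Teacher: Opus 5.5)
Your proof is correct and is essentially the paper's argument: you build nested fundamental domains inductively via the construction of Lemma \ref{L - pair qp spaces}, take their union and check existence and uniqueness, and you treat decreasing intersections as invariant subsets via Lemma \ref{L - qp invariant subspace}. Your uniqueness step, placing both points in the larger domain $F_k$, is stated more precisely than in the paper, which writes $p,q\in Z_n$ where the fundamental domain $Z_0^n$ is meant; the extra property $F_{n+1}\cap Y_n=F_n$ you record is harmless but not needed.
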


\begin{proof}
Suppose $(Z^n)_{n\geq 1}$ is a countable increasing union of quasi-periodic countable sets. It follows by applying Lemma \ref{L - pair qp spaces} inductively that there exists an increasing sequence $Z_0^1\subset Z_0^2\subset \cdots$ of measurable sets such that  for every $n\geq 1$, the set $Z_0^n$ is a fundamental domain of  the space $Z^n$. The direct limit $Z=\varinjlim Z^n$ of the sequence $(Z^n)_{n\geq 1}$ is a measurable $R$-bundle, and the direct limit $Z_0=\varinjlim Z_0^n$ of of the sequence $(Z_0^n)_{n\geq 1}$ is a mesurable subset of $Z$. If $p\in Z$ then there exists an integer $n$ such that $p\in Z_n$. Thus, there exists an $r\in R$ and $p_0\in Z^n_0\subset Z_0$ such that $p=rp_0$. Similarly, $p,q\in Z_0$ verify $p=rq$ for some $r\in R$, then there exists an integer $n$ such that $p,q\in Z_n$. Thus, $r=1$ and $p=q$.  If $(Z^n)_{n\geq 1}$ is a countable decreasing intersection of quasi-periodic spaces, their intersections $Z$ is an invariant subset of $Z^1$, and in particular is quasi-periodic.
\end{proof}

This shows that the full subcategory $\Sigma^R_{\mathrm{qp}}\subset \Sigma^R$ of quasi-periodic spaces in $\Sigma$ is stable under invariant subsets, complements, and increasing countable unions and intersections. Next, we discuss lifting and quotient stability.

\begin{lemma}\label{L - qp  R  map }
Suppose $Y, Z$ are $R$-spaces, and $\pi\colon Y\to Z$ is a morphism. If $Z$ is quasi-periodic so is $Y$. 
\end{lemma}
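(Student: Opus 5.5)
The plan is to reduce to Lemma \ref{L - qp direct product} and Lemma \ref{L - qp invariant subspace} by identifying $Y$ with the graph of $\pi$ inside the fibered product $Y\times_X Z$. Concretely, consider the map
\[
\gamma\colon Y\to Y\times_X Z,\qquad p\mapsto (p,\pi(p)).
\]
The map $\gamma$ is Borel since $\pi$ is Borel and fibre-preserving. It is equivariant, because $\pi$ is a natural transformation: $\gamma(rp)=(rp,\pi(rp))=(rp,r\pi(p))=r\gamma(p)$. Finally, it is injective on each fibre, with Borel left inverse the first projection. Its image $\Gamma_\pi$ is therefore an $R$-invariant Borel subspace of $Y\times_X Z$; it is Borel as the injective Borel image, or directly as $\{(p,q): q=\pi(p)\}$.

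By Lemma \ref{L - qp direct product}, $Y\times_X Z$ is quasi-periodic, since $Z$ is. Then Lemma \ref{L - qp invariant subspace} shows that $\Gamma_\pi$ is quasi-periodic. Since $\gamma\colon Y\to\Gamma_\pi$ is an equivariant Borel isomorphism, with inverse the first projection, a fundamental domain of $\Gamma_\pi$ pulls back to a fundamental domain of $Y$.

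This gives the explicit description $Y_0=\pi^{-1}(Z_0)$ for a fundamental domain $Z_0$ of $Z$. One can also check directly that this set is a fundamental domain.
\begin{itemize}
\item \emph{Every orbit meets $Y_0$.} Given $p\in Y$, write $\pi(p)=rq_0$ with $q_0\in Z_0$. Then $\pi(r^{-1}p)=q_0\in Z_0$.
\item \emph{Every orbit meets $Y_0$ at most once.} If $p,rp\in Y_0$, then $\pi(p)$ and $r\pi(p)$ both lie in $Z_0$. Hence $r$ is the identity arrow at the base point, and so $rp=p$.
\end{itemize}

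The only point requiring care is the last uniqueness step. It uses that $R$ acts freely in the sense that $rq=q'$ with $q,q'\in Z_0$ forces $r$ to be a unit, which is exactly how fundamental domains are used in the proof of Lemma \ref{L - qp direct product}. I expect this to be the main (minor) obstacle: making sure the notion of fundamental domain is read as "unique $r$", as in the proof of Lemma \ref{L - qp invariant subspace}, so that uniqueness transfers along $\pi$. In the category of graphs, the argument is applied separately to vertices and edges. Null sets are discarded in the pmp setting as usual.
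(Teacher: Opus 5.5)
Your proof is correct. The explicit verification in your last part is exactly the paper's proof: take $Y_0=\pi^{-1}(Z_0)$, get existence by transporting $r$ through equivariance ($\pi(r^{-1}p)=r^{-1}\pi(p)$), and get uniqueness because $\pi(p)$ and $r\pi(p)$ both lie in $Z_0$.

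Your primary route through the graph $\Gamma_\pi\subset Y\times_X Z$ is a different packaging. It derives the statement from Lemma~\ref{L - qp direct product} and Lemma~\ref{L - qp invariant subspace} instead of checking the fundamental domain by hand. It is valid: the graph is Borel and invariant, and $\gamma$ is an equivariant Borel bijection onto it whose inverse, the first projection, is Borel. It does not produce anything new, though. The fundamental domain $Y\times_X Z_0$ of Lemma~\ref{L - qp direct product}, intersected with $\Gamma_\pi$ and pulled back along $\gamma$, is again $\pi^{-1}(Z_0)$. What the detour does show is that the two lemmas are interchangeable. Conversely, the second projection $Y\times_X Z\to Z$ is a morphism, and the present lemma applied to it returns Lemma~\ref{L - qp direct product} with the same fundamental domain $Y\times_X Z_0$.

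The ``obstacle'' you flag is not one. Because $R$ is an equivalence relation, the arrow $(x,y)$ carries the fibre over $y$ to the fibre over $x$. So $rq=q$ forces $x=y$, i.e.\ $r$ is a unit. Thus ``meets every orbit precisely once'' already gives the ``unique $r$'' reading, and each orbit meets each fibre in at most one point.
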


\begin{proof}
Suppose now that $Z$ is quasi-periodic and let $Z_0$ be a fundamental domain for $Z$. Let $Y_0:=\pi^{-1}(Z_0)$. Clearly,  $Y_0$ is Borel. If $p\in Y$, then there exists a a unique $r\in R$ and a unique $q_0\in Z_0$  such that $r\pi(p)\in Z_0$. Since $\pi$ is equivariant,  $rp\in Y_0$.  Next we prove that $Y_0$ is a fundamental domain. Suppose $p,q\in Y_0$ verify $q=rp$ for some $r\in R$.  Then $\pi(q)=r\pi(p)\in Z_0$. Thus, $r=1$ and $p=q$. This prove that $Y$ is quasi-periodic.
\end{proof}

\begin{lemma}\label{L - qp  finite to 1}
Suppose $Y, Z$ are $R$-spaces, and $\pi\colon Y\to Z$ is an essentially surjective finite-to-1 morphism. Then  $Y$ is quasi-periodic if and only if $Z$ is quasi-periodic. 
\end{lemma}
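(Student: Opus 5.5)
The direction ``$Z$ quasi-periodic $\Rightarrow$ $Y$ quasi-periodic'' is Lemma \ref{L - qp  R  map }, which needs no finiteness or surjectivity: pull back a fundamental domain $Z_0$ of $Z$ to $Y_0=\pi^{-1}(Z_0)$. So the plan concentrates on the converse. Assume $Y$ is quasi-periodic with fundamental domain $Y_0$; we want a fundamental domain for $Z$, discarding a null set as needed so that $\pi$ is surjective.

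The approach is to use the standard criterion that a countable Borel equivalence relation is smooth iff it admits a Borel selector, or equivalently a Borel transversal. Quasi-periodicity of $Z$ means exactly that the orbit partition $R_Z$ is smooth. The orbit partition $R_Y$ is smooth and its classes are countable, since $R$ has countable classes and each fibre of $Y$ is countable. The map $\pi$ is a Borel homomorphism from $R_Y$ to $R_Z$ by equivariance. It is surjective, and surjective on classes: if $z\in Z$ and $z=\pi(y)$, then $\pi(Ry)=Rz$.

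The key construction is the pushforward $\pi(Y_0)\subset Z$. It meets every $R$-orbit of $Z$, because $\pi$ is surjective and $\pi(Ry)=R\pi(y)$. It meets each orbit in only finitely many points. Indeed, if $\pi(y_1),\dots,\pi(y_k)$ lie in a single orbit $Rz$, then each $y_i$ is $R$-equivalent to a point of $\pi^{-1}(z)$. Since $Y_0$ contains exactly one point of each $R$-orbit of $Y$, the number of distinct $y_i\in Y_0$ is at most the number of $R$-orbits meeting the finite set $\pi^{-1}(z)$, hence $k\le|\pi^{-1}(z)|<\infty$. Also, $\pi(Y_0)$ is Borel, since $\pi$ is countable-to-one and the Lusin--Novikov theorem gives Borel images. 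Thus $\pi(Y_0)$ is a Borel complete section of $R_Z$ meeting every class in a finite nonempty set.

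From a Borel complete section with finite class-intersections, a transversal is obtained in the standard way. Fix a Borel linear order on $Z$, e.g.\ via a Borel injection into $\mathbb{R}$. Let $Z_0$ be the set of points of $\pi(Y_0)$ that are minimal in their $R_Z$-class intersected with $\pi(Y_0)$. Then $Z_0$ meets every orbit exactly once, because the intersections are finite and nonempty. It is Borel because $R_Z$ is a countable Borel equivalence relation: by Feldman--Moore, write it as the orbit relation of a countable Borel group action and quantify over the countably many group elements, or use Lusin--Novikov on $R*_X Z$. This gives the fundamental domain.

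The main obstacle I expect is the Borel-measurability bookkeeping rather than the combinatorics. This includes checking that $R_Z$ is a countable Borel equivalence relation, being the image of the Borel action map $R*_XZ\to Z\times Z$ with countable sections, and that images under $\pi$ are Borel. Both follow from Lusin--Novikov. Finiteness of fibres is what makes $\pi(Y_0)$ finite on each orbit; without it the minimal-element selection could fail, as the irrational rotation quotients show. In the pmp setting, negligible sets where $\pi$ fails to be surjective are discarded, consistent with the paper's conventions.
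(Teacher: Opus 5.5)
Your proposal is correct and follows essentially the same route as the paper. You push $Y_0$ forward to $\pi(Y_0)$, show it meets every orbit, and bound each orbit's intersection with it by $|\pi^{-1}(z)|$; the paper phrases this bound as an injection $p_i\mapsto r_ip_i$ into $\pi^{-1}(q)$. You then extract a transversal from the resulting finite Borel equivalence relation, and the converse comes from the preceding lemma by pulling back a fundamental domain. The only difference is that you spell out the transversal selection via a Borel linear order, where the paper simply cites that finite Borel equivalence relations admit a fundamental domain.
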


\begin{proof}
Suppose first that $Y$ is quasi-periodic and let $Y_0$ be a fundamental domain for $Y$. We prove that $Z_0:=\pi(Y_0)$ contains a fundamental domain for $Z$. Clearly, $Z_0$ is Borel since $\pi$ is finite-to-1. Let $q\in Z$. Since $\pi$ is surjective there exists $p\in Y$ such that $q=\pi(p)$. Let $r\in R$ and $p_0\in Y_0$ such that $p=rp_0$. Then $q_0:=\pi(p_0)\in Z_0$ and since $\pi$ is an $R$-map, we have $rq_0=r\pi(p_0)=\pi(rp_0)=\pi(p)=q$. 

Next we prove that the restriction to  $Z_0$ of the action of $R$ on $Z$ has finite classes. Let $q,q_1,\ldots,q_n \in Z_0$ be elements such that there exists $r_1,\ldots,r_n \in R$ verifying $q=r_iq_i$. Choose $p,p_1,p_2, \ldots\in Y_0$ such that $q=\pi(p)$ and $q_i=\pi(p_i)$. Then $\pi(r_ip_i)=q$, so $r_ip_i\in \pi^{-1}(q)$. Furthermore, the map $\bigvee_{i=1}^n \{p_i\}\to \pi^{-1}(q)$ taking $p_i$ to $r_ip_i$ is injective. Indeed, if $r_ip_i=r_jq_j$, then $p_i=r_i^{-1}r_jp_i$ which implies $r_i=r_j$ since $p_i,p_j\in Y_0$. Thus, $p_i=p_j$.  In particular, $n\leq |\pi^{-1}(q)|$. This prove that the class of $q\in Z_0$ is finite. Since any Borel finite equivalence relation admits a fundamental domain, we have shown that $Z$ is quasi-periodic. 

The converse follows by the previous lemma.
\end{proof}

If we assume that $\Sigma$ is stable by quotient by finite groups of automorphisms, this implies the following result.

\begin{lemma}\label{L - qp quotient by finite group}
Suppose $Y$ is a quasi-periodic space and $H$ is a finite group acting by fibred automorphisms of the field $Y(X)$ commuting to $Y$. In other words, we assume that $h\in H$ acts as an automorphism of $Y(x)$ in the category $\Sigma$, such that $rh=hr$ for every $r\in R$. Then the space $Z:=Y/H$ of $H$-orbits is quasi-periodic. 
\end{lemma}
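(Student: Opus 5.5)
The quotient map $\pi\colon Y\to Z:=Y/H$, sending a point of $Y(x)$ to its $H$-orbit in $Z(x)=Y(x)/H$, should be an essentially surjective finite-to-1 morphism of $R$-spaces. Once that is established, Lemma \ref{L - qp  finite to 1} applied to $\pi$ gives the result directly: since $Y$ is quasi-periodic, so is $Z$.

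The first step is to check that $Z$ is an $R$-space in $\Sigma$. Fibrewise, $Z(x)=Y(x)/H$ is an object of $\Sigma$ because $\Sigma$ is assumed stable under quotients by finite groups of automorphisms. For $r=(x,y)\in R$, define $Z(r)\colon Z(y)\to Z(x)$ by $Z(r)(Hp):=H\,Y(r)p$. This is well defined because $rh=hr$: if $p'=hp$, then $Y(r)p'=h\,Y(r)p$ lies in the same $H$-orbit. Functoriality of $Z$ then follows from functoriality of $Y$.

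The second step is to equip $Z(X)$ with a standard Borel structure such that the projection to $X$ and the action map are Borel. Since $H$ is finite and acts by Borel automorphisms of $Y(X)$, the orbit equivalence relation of $H$ on $Y(X)$ is a Borel equivalence relation with finite classes. Such a relation admits a Borel transversal; for instance, fix a Borel linear order on $Y(X)$ and take the minimum of each orbit. The quotient $Z(X)$ can therefore be identified with this transversal, which is a Borel subset of $Y(X)$ and hence standard Borel. Under this identification, $\pi$ is the Borel map sending $p$ to the minimal element of $Hp$, and the action of $R$ on $Z(X)$ is $\pi\circ(\text{action on }Y)$ restricted to the transversal. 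Both maps are Borel, and $\pi$ is equivariant by construction. For graphs one applies the same construction separately to vertices and edges.

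With these two steps done, $\pi$ is surjective, its fibres are $H$-orbits of size at most $|H|$, and it is a natural transformation $Y\Rightarrow Z$, so Lemma \ref{L - qp  finite to 1} applies.

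I expect the main obstacle to be the second step, namely making the Borel structure on $Y(X)/H$ rigorous. The key fact needed is that finite Borel equivalence relations are smooth, which is standard (see \cite{K}). If one prefers to bypass Lemma \ref{L - qp  finite to 1}, there is a direct alternative: take a fundamental domain $Y_0$ of $Y$ and observe that $\pi(Y_0)$ meets each $R$-orbit of $Z$ in at most $|H|$ points, so that a Borel selector yields a fundamental domain of $Z$. This is essentially the same argument as the one in the proof of Lemma \ref{L - qp  finite to 1}.
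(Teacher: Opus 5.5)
Your proposal is correct and follows the paper's proof. The paper also observes that $\pi\colon Y(X)\to Y(X)/H$ is a finite-to-1 Borel $R$-map, well defined on $Z$ because $rh=hr$, and concludes by Lemma \ref{L - qp  finite to 1}; your explicit Borel structure on $Y(X)/H$, built from a Borel transversal of the finite $H$-orbit relation, just fills in what the paper assumes when it speaks of ``the quotient Borel structure''.
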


\begin{proof}
Here we assume that  $Z(X):=Y(X)/H$ is endowed with the quotient Borel structure, such that the  map 
\begin{align*}
\pi\colon Y(X)&\to Z(X)\\
p&\mapsto Hp
\end{align*}
is a finite-to-1 Borel map. Since the action of $R$-commutes to the action of $Y$, it is clear that $Z$ is an $R$-space and $\pi$ is an $R$-map. If $Y_0$ is a fundamental domain for $Y$, then $\pi(Y_0)$ contains a fundamental domain for $Z$ by the previous lemma.
\end{proof}

Next we discuss stability under subrelation.

\begin{lemma}\label{L -qp subrelation}
 Suppose $S$ is a relation of $R$. If $Y$ is a quasi-periodic $R$-space then $Y$ is a quasi-periodic $S$-space.
\end{lemma}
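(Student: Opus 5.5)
Let $Y_0\subset Y(X)$ be a fundamental domain for the $R$-action, and write $Y|_S$ for the restriction of the functor $Y$ to the subgroupoid $S\subset R$. Then $R_{Y|_S}\subset R_Y$: two points in the same $S$-orbit lie in the same $R$-orbit. A fundamental domain for $R_Y$ need not be one for $R_{Y|_S}$, since an $R$-orbit usually splits into several $S$-orbits. The plan is to decompose $Y$ $R$-equivariantly as a fibered product over $X$ of an $R$-space, on which $S$ acts, with a space whose smoothness under $S$ is automatic, and then invoke Lemma \ref{L - qp direct product} or Lemma \ref{L - qp  R  map }.

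\textbf{Step 1: the orbit map.} Use the fundamental domain to define a Borel map
\[
\phi\colon Y(X)\to R,\qquad p\mapsto (\pi(p),\pi(q)),
\]
where $q\in Y_0$ is the unique point with $p=Y(\pi(p),\pi(q))q$. Uniqueness of $q$ comes from the fundamental domain property, assuming (as in the earlier lemmas) that the stabilizers of the $R$-action are trivial in the sense implicit in ``meets every orbit precisely once''. Borelness follows from the Lusin--Souslin theorem: the map $R*_XY_0\to Y(X)$, $(r,q)\mapsto rq$, is a Borel bijection, hence a Borel isomorphism.

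\textbf{Step 2: $\phi$ is equivariant.} View $R$ itself as an $R$-space over $X$ via the left projection $\l$, with $R$ acting by left multiplication. Then $\phi$ is an $R$-map, hence also an $S$-map.

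\textbf{Step 3: $R$ is quasi-periodic as an $S$-space.} Under left multiplication by $S$, the orbits are the sets $\{(z,y): (z,x)\in S\}$ for fixed $(x,y)\in R$. This orbit relation is smooth, with fundamental domain the diagonal-type set $\{(x,y)\in R : x=\sigma(y,\cdot)\}$. To build it, note that the $S$-orbit of $(x,y)$ is determined by the pair $(y,[x]_S)$. Choosing one point in $[x]_S\cap[y]_R$ for each $S$-class inside a countable $R$-class requires a Borel choice of representatives for $S$-classes relative to $R$. For countable Borel equivalence relations this is done with a Feldman--Moore enumeration: write $R=\bigcup_n \operatorname{graph}(g_n)$ for Borel automorphisms $g_n$, and in the orbit of $(x,y)$ select the element $(g_n y, y)$ with the least $n$ such that $(g_ny,x)\in S$. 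This is Borel, and it picks exactly one point per $S$-orbit.

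\textbf{Step 4: conclusion.} Apply Lemma \ref{L - qp  R  map } with $\pi=\phi$ and $Z=R$ as an $S$-space. This gives that $Y$ is a quasi-periodic $S$-space, with explicit fundamental domain $\phi^{-1}$ of the set chosen in Step 3.

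\textbf{Main obstacle.} I expect Step 3 to be the delicate point. Selecting minimal indices in a countable enumeration is the standard device and should work here precisely because $R$ has countable classes. Measurability of $\phi$ in Step 1 is routine.
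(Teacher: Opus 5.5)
Your argument is correct and is essentially the paper's. The paper fixes a fundamental domain $R_0$ for the left action of $S$ on $R$ and checks directly that $R_0Y_0=\{rq : r\in R_0,\ q\in Y_0\}$ is an $S$-fundamental domain; this is exactly your $\phi^{-1}(R_0)$, so going through the orbit map $\phi$ and Lemma \ref{L - qp  R  map } only repackages that check. Your Feldman--Moore construction of $R_0$ fills in a step the paper takes for granted, and your stabilizer caveat in Step 1 is unnecessary, since an arrow of an equivalence relation is determined by its endpoints.
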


\begin{proof}
Let $R_0$ be a fundamental domain for the left action of $S$ on $R$. Let $Y_0$ be a fundamental domain of $Y$ for $R$. We claim that $Y_1:=\{ry : r\in  R_0, \ y\in Y\}$ is a fundamental domain for $S$. Namely, if $y\in Y$ then there exists an $r\in R$ and a $y_0\in Y_0$ such that $y=ry_0$. If $r_0\in R_0$ and $s\in S$ verify $r=sr_0$ then $y=s(r_0y_0)$ where $r_0y_0\in Y_1$. Furthermore, if $sr_0y_0=r_1y_1$ for some $s\in S$, $y_0,y_1\in Y_0$ and $r_0, r_1\in R_0$, then $sr_0=r_1$ since $Y_0$ is a fundamental domain for $R$, and so $s=1$ since $R_0$ is a fundamental domain for $S$.
\end{proof}

\begin{lemma} \label{L - qp finite index}
Suppose $S$ is a subrelation of $R$ of finite index and $Y$ is an $R$-space. Then $Y$ is a quasi-periodic $R$-space if and only if $Y$ is a quasi-periodic $S$-space.
\end{lemma}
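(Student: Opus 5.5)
One direction is immediate: if $Y$ is a quasi-periodic $R$-space, then Lemma \ref{L -qp subrelation} shows that $Y$ is a quasi-periodic $S$-space. This does not use finite index. All the work is in the converse.

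For the converse, let $Y_0$ be a fundamental domain for the $S$-action on $Y(X)$. I would first show that $Y_0$ meets every $R$-orbit in a finite, nonempty set. It is nonempty because every $S$-orbit is contained in an $R$-orbit and meets $Y_0$. For finiteness, fix $p\in Y^x$ and consider the points of $Y_0$ lying in $Rp$; each has the form $Y(x',x)p$ for some $x'$ with $(x',x)\in R$. Finite index means that the $R$-class of $x$ splits into at most $n$ classes of $S$, say with representatives $x_1,\dots,x_n$. Any $x'$ in the class of $x_i$ satisfies
\[
Y(x',x)p = Y(x',x_i)\,Y(x_i,x)p,
\]
with $(x',x_i)\in S$, so it lies in the $S$-orbit of $Y(x_i,x)p$. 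Hence $Rp$ is a union of at most $n$ $S$-orbits, and $|Y_0\cap Rp|\le n$.

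It follows that the restriction of the orbit relation $R_Y$ to $Y_0$ is a Borel equivalence relation with finite classes, uniformly bounded by $n$. It is Borel because it is the image of the Borel set $\{(r,p): p\in Y_0,\ rp\in Y_0\}$ under a countable-to-one map, by the Lusin--Novikov theorem. As in the proof of Lemma \ref{L - qp  finite to 1}, a finite Borel equivalence relation admits a Borel transversal $T\subset Y_0$; concretely, fix a Borel linear order on $Y(X)$ and take the least element of each class. Since $T$ meets each $R$-orbit of $Y_0$ exactly once and every $R$-orbit of $Y$ meets $Y_0$, $T$ is a fundamental domain for $R$ on $Y$.

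The main obstacle I expect is the Borel measurability bookkeeping rather than the combinatorics. One must check that the induced relation on $Y_0$ is Borel and that the transversal chosen is Borel. If needed, I would instead use Lemma \ref{L - qp  finite to 1} more directly: the map $Y_0\to Y_0/(R_Y|_{Y_0})$ is finite-to-one, and a Borel selector for the finite classes can be obtained from a countable family of Borel partial sections of $R$ enumerating it (Feldman--Moore), choosing at each point the least index.
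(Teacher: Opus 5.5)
Your proof is correct and follows the paper's route. You take an $S$-fundamental domain $Y_0$, observe that finite index makes the restriction of the $R$-orbit relation to $Y_0$ a finite equivalence relation, and choose a Borel transversal, which is then an $R$-fundamental domain. You also spell out what the paper leaves implicit: the bound $|Y_0\cap Rp|\le n$, the Borel measurability bookkeeping, and the easy direction via Lemma~\ref{L -qp subrelation}.
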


\begin{proof}
Let $Y$ be an $S$-quasi-periodic $R$-space. Let $Y_0$ be a fundamental domain for $S$. Since $[R:S]<\infty$, the restriction $R_0$ of the orbit partition of the $R$-space $Y$ to $Y_0$ is an fsr. Thus, we may choose a fundamental domain $Y_1\subset Y_0$ for this equivalence relation. This is straightforward to check that $Y_1$ is a fundamental domain of $Y$ viewed as an $R$-space. Namely, if $y\in Y$, take $s\in S$ and $y_0\in Y_0$ such that $y=sy_0$. There exists a unique $y_1\in Y_1$ such that $(y_0,y_1)\in R_0$. Take $r_0\in R$ such that $y_0=ry_1$. Then $y=sr_0y_1$. Suppose $y_0,y_1\in Y_1$ verify $ry_0=y_1$. Since $Y_1\subset Y_0$, this implies that $r\in R_0$. Since $y_0,y_1\in Y_1$, this implies $r=1$. Thus, $y_0=y_1$, establishing uniqueness. 
\end{proof}

We now turn to results that are more specific to the category of graphs.

\begin{lemma}
Suppose $Y$ is an $R$-graph. If the $R$-map $EY\to VY\times_X VY$ is finite-to-1 and $VY$ is quasi-periodic, then $EY$ is quasi-periodic. 
\end{lemma}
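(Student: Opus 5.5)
The plan is to reduce the statement to the two lifting results already proved, Lemma \ref{L - qp direct product} and Lemma \ref{L - qp  finite to 1}. The map in question is the incidence map $\varepsilon\colon EY\to VY\times_X VY$, $e\mapsto (s(e),r(e))$. Because $R$ acts by graph automorphisms fibrewise, $\varepsilon$ is an $R$-map. It is Borel because the structure maps of a Borel field of graphs are Borel.

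The first step is to show that $VY\times_X VY$ is quasi-periodic. This follows from Lemma \ref{L - qp direct product} applied with the $R$-space $VY$ in the first coordinate and the quasi-periodic space $VY$ in the second. Explicitly, if $V_0$ is a fundamental domain for $VY$, then $VY\times_X V_0$ is a fundamental domain for the fibred product.

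The second step is to transfer quasi-periodicity from $VY\times_X VY$ to $EY$. The hypothesis "finite-to-1" plays no role here: Lemma \ref{L - qp  R  map } already says that the source of any $R$-morphism into a quasi-periodic space is quasi-periodic. Concretely, $\varepsilon^{-1}(VY\times_X V_0)$, which is the set of edges whose range vertex lies in $V_0$, is a Borel fundamental domain for $EY$. One can also argue through Lemma \ref{L - qp  finite to 1}. In that case one first restricts the target to the invariant Borel subspace $\varepsilon(EY)$; this set is Borel because $\varepsilon$ is countable-to-1 (Lusin–Novikov). That subspace is quasi-periodic by Lemma \ref{L - qp invariant subspace}, and it makes $\varepsilon$ essentially surjective and finite-to-1.

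The one point I would check, and the main potential obstacle, is that the fundamental domain $Y_0=\varepsilon^{-1}(Z_0)$ in the proof of Lemma \ref{L - qp  R  map } really meets each orbit only once. If $e$ and $\rho e$ both lie in $Y_0$ for some $\rho\in R$, then $\varepsilon(e)$ and $\rho\varepsilon(e)$ both lie in $Z_0$, which forces $\rho$ to be trivial on that point. For $R$-spaces the acting arrow is determined by the base points, so $\rho=1$. Existence of a representative in $Y_0$ for every orbit is immediate by equivariance of $\varepsilon$. Hence $EY$ admits a fundamental domain, i.e.\ it is quasi-periodic.
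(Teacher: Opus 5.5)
Your proof is correct, and your main route is a genuine streamlining of the paper's. The paper's proof is the route you list as your alternative. It gets quasi-periodicity of $VY\times_X VY$ from Lemma \ref{L - qp direct product}, passes to the image of $EY$ (quasi-periodic by Lemma \ref{L - qp invariant subspace}), and concludes with Lemma \ref{L - qp  finite to 1} applied to the finite-to-1 surjection of $EY$ onto that image.

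Your main route instead pulls a fundamental domain back directly with Lemma \ref{L - qp  R  map }. The two are closer than they look. The implication of Lemma \ref{L - qp  finite to 1} that the paper needs here is that the target being quasi-periodic forces the source to be. That is exactly the direction the paper proves simply by citing Lemma \ref{L - qp  R  map }. So in the paper's argument, the passage to the image, the check that the image is Borel, and the finite-to-1 hypothesis all do no work; finiteness of fibres is used only in the other direction of Lemma \ref{L - qp  finite to 1}.

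What your version buys is the sharper statement that $VY$ quasi-periodic implies $EY$ quasi-periodic for an arbitrary $R$-graph. Even the fibred product can be dropped: your domain, the set of edges whose terminal vertex lies in $V_0$, is just the preimage of $V_0$ under the range map $EY\to VY$, which is already an $R$-map. The finite-to-1 condition matters only for the converse direction ($EY$ quasi-periodic implies $VY$ quasi-periodic), which is the content of the paper's next lemma for connected locally finite graphs.
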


In particular, if $Y$ is a simplicial graph (e.g., a tree), then $VY$ quasi-periodic implies $EY$ quasi-periodic. 

\begin{proof} It follows by assumption and  Lemma \ref{L - qp direct product} that $VY\times_X VY$ is quasi-periodic. 
Since $EY\to VY\times_X VY$ is an $R$-map, it follows by Lemma \ref{L - qp invariant subspace} that its image in $VY\times_X VY$ is a quasi-periodic subset. Lemma \ref{L - qp  finite to 1} now implies that $EY$ is quasi-periodic. 
\end{proof}

The converse is of course not true; however, we have the following for connected locally finite graphs.  

\begin{lemma}
Suppose $Y$ is a connected locally finite $R$-graph. Then  $VY$ is quasi-periodic if and only if $EY$ is quasi-periodic. 
\end{lemma}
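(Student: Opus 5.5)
We prove the two implications separately. Both rely on one observation: in a locally finite graph, the map sending an edge to its ordered pair of endpoints is finite-to-1. In the opposite direction, the relation ``$v$ is an endpoint of $e$'' is finite-to-1 from vertices to edges.

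For ``$VY$ quasi-periodic $\Rightarrow$ $EY$ quasi-periodic'', we use the $R$-map $EY\to VY\times_X VY$, $e\mapsto (s(e),r(e))$. It is finite-to-1 because each fibre consists of edges between two fixed vertices, and $Y(x)$ is locally finite. The previous lemma then applies verbatim. Concretely, $VY\times_X VY$ is quasi-periodic by Lemma \ref{L - qp direct product}. The image of $EY$ is an invariant subspace, hence quasi-periodic by Lemma \ref{L - qp invariant subspace}. Finally $EY$ is quasi-periodic by Lemma \ref{L - qp  finite to 1}.

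For the converse, assume $EY$ is quasi-periodic. If some fibre $Y(x)$ is a single vertex with no edges, it must be treated separately. The set of such $x$ is Borel and $R$-invariant, and on it $VY$ is a field of singletons. The section $x\mapsto$ (that vertex) is then an $R$-invariant copy of $X$, so $R$ acts on it as $R$ acts on $X$. This part is therefore quasi-periodic only if $R$ restricted to this invariant set is smooth. I expect this to be the delicate point: one either notes that this degenerate case must be excluded (e.g.\ by assuming $Y(x)$ has at least one edge), or handles it by convention. The main step is on the complement, where connectedness gives every vertex at least one incident edge. There we form the incidence space
\[
I:=\{(v,e)\in VY\times_X EY : v\in\{s(e),r(e)\}\},
\]
which is an invariant Borel subspace of $VY\times_X EY$. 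It is quasi-periodic by Lemma \ref{L - qp direct product} (with $Z=EY$ quasi-periodic) and Lemma \ref{L - qp invariant subspace}. The projection $I\to VY$, $(v,e)\mapsto v$, is an $R$-map. It is essentially surjective because every vertex has an incident edge, and it is finite-to-1 because $Y(x)$ is locally finite. Lemma \ref{L - qp  finite to 1} then shows that $VY$ is quasi-periodic.

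Beyond the degenerate single-vertex case, the remaining work is checking Borelness. $I$ is Borel because $s,r$ are Borel. The projection is Borel and countable-to-1, so the image statements in Lemma \ref{L - qp  finite to 1} apply (Lusin–Novikov). Connectedness is used only to guarantee that incident edges exist. Local finiteness is used in both directions to make the relevant maps finite-to-1.
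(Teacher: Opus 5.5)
Your proof is correct and is essentially the paper's argument. The paper applies Lemma \ref{L - qp  finite to 1} to the finite-to-1 maps $\s,\r$ to get $\s(EY)$ and $\r(EY)$ quasi-periodic and then takes their union via Lemma \ref{L - qp stable by union and intersection}; your incidence space $I$ simply packages $\s$ and $\r$ into a single finite-to-1 surjection onto $VY$. Your caveat is also right: the paper's step ``since $Y$ is connected, $VY=\s(EY)\cup\r(EY)$'' silently excludes fibres made of one vertex and no edges. On such an invariant set with $R$ non-smooth, $EY=\emptyset$ is quasi-periodic while $VY$ is not, so the statement as written fails there and that case must be excluded by hypothesis.
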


\begin{proof}
If $Y$ is locally finite, then both maps $\s,\r$ are finite-to-1. It follows by Lemma \ref{L - qp  finite to 1} that $\s(EY)$ and $\r(EY)$ are invariant quasi-periodic subsets of $VY$. Since $Y$ is connected, we have $VY=\s(EY)\cup \r(EY)$. The result follows by Lemma \ref{L - qp stable by union and intersection}. 
\end{proof}

\begin{lemma}\label{L - quasi-periodic vertex set}
Suppose $Y$ is a connected locally finite $R$-graph. If $VY$ contains a quasi-periodic set, then $VY$ is quasi-periodic.   
\end{lemma}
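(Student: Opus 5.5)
Let $W\subset VY$ be a nonempty invariant quasi-periodic subset. The plan is to exhaust $VY$ by the sets $W_n$ of vertices at distance at most $n$ from $W$, show each $W_n$ is quasi-periodic by induction on $n$, and then apply Lemma \ref{L - qp spaces increasing unions}. Here $W$ is assumed to be an invariant (Borel) subfield of $VY$, meeting almost every fibre $VY(x)$; if it only meets a positive measure set of fibres, replace $X$ by its $R$-saturation, since the statement is fibrewise.

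Define
\[
W_n:=\{v\in VY : d(v,W^{\pi(v)})\le n\},
\]
where $d$ is the combinatorial distance in the connected graph $Y(\pi(v))$. Each $W_n$ is Borel. It is also $R$-invariant, because $R$ acts by graph isomorphisms between fibres that map $W$ onto $W$. By connectedness of each fibre, $VY=\bigcup_n W_n$, an increasing union. So by Lemma \ref{L - qp spaces increasing unions} it suffices to show that every $W_n$ is quasi-periodic, and $W_0=W$ is quasi-periodic by hypothesis.

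For the inductive step, consider the invariant Borel subspace
\[
P_n:=\{(v,w)\in VY\times_X VY : w\in W_n,\ v\in W_{n+1},\ v,w \text{ adjacent}\}.
\]
By Lemma \ref{L - qp direct product}, $VY\times_X W_n$ is quasi-periodic, so by Lemma \ref{L - qp invariant subspace} its invariant subspace $P_n$ is quasi-periodic.

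Now consider the two projections of $P_n$. The projection $P_n\to W_n$, $(v,w)\mapsto w$, is finite-to-1 by local finiteness, and it is consistent with $P_n$ being quasi-periodic. The projection that matters is $\rho\colon P_n\to W_{n+1}$, $(v,w)\mapsto v$. It is an $R$-map, and it is surjective, since every $v\in W_{n+1}\setminus W_n$ has a neighbour in $W_n$ and every $v\in W_n$ is a vertex of some edge (when $n\ge 1$; the case $n=0$ is handled similarly, using $W_1\supset W_0$ and Lemma \ref{L - qp stable by union and intersection}). It is also finite-to-1, again by local finiteness. Hence Lemma \ref{L - qp  finite to 1}, in the direction ``$Y$ quasi-periodic implies $Z$ quasi-periodic'', gives that $W_{n+1}$ is quasi-periodic.

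To avoid edge cases, one can instead apply this argument to $W_{n+1}\setminus W_n$, which is the image of $P_n$ restricted to $v\notin W_n$. Then $W_{n+1}=W_n\cup(W_{n+1}\setminus W_n)$ is quasi-periodic by Lemma \ref{L - qp stable by union and intersection}.

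The only genuine point is the Borelness of the sets $W_n$ and $P_n$ and of the image; this is where local finiteness enters. The distance function is Borel because adjacency is Borel and $d\le n$ is a countable union over paths. The finite-to-1 images of Borel sets are Borel by Lusin--Novikov. I expect this measurability bookkeeping to be the main, though routine, obstacle.
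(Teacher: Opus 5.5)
Your proof is correct, but it runs in the opposite direction from the paper's.

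\textbf{The paper's route.} It uses the same exhaustion by the sets $V_n$ of vertices within distance $n$ of $V_0$, but treats each $V_n$ in one step. It asserts that $V_n$ admits an equivariant finite-to-1 retraction onto $V_0$ and pulls quasi-periodicity back along it. This uses only the easy direction: any $R$-map into a quasi-periodic space forces the source to be quasi-periodic (Lemma \ref{L - qp  R  map }), so the finite-to-1 property is not actually needed there. What the paper leaves implicit is the existence of that retraction, which requires a Borel \emph{equivariant} choice of a nearest point of $V_0$. Such a choice can be made: fix a fundamental domain $D$ of $V_0$ and a Borel linear order on $D$, and select the nearest point whose representative in $D$ is smallest. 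This works because nearest-point sets are finite by local finiteness, and distinct points of one fibre have distinct representatives in $D$.

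\textbf{Your route.} You avoid any selection. The incidence space $P_n$ is quasi-periodic for free by Lemmas \ref{L - qp direct product} and \ref{L - qp invariant subspace}. You then push this down along the surjection $P_n\to W_{n+1}\setminus W_n$, which is finite-to-1 exactly because of local finiteness. This uses the harder direction of Lemma \ref{L - qp  finite to 1}, which rests on smoothness of finite Borel equivalence relations. The price is the induction on $n$. Your fallback to $W_{n+1}\setminus W_n$ correctly handles fibres consisting of a single vertex, where your first surjectivity claim fails.

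\textbf{One small caveat.} Restricting to the invariant set of fibres that $W$ meets only gives the conclusion over that set. The lemma genuinely needs $W$ to meet almost every fibre, as the paper also tacitly assumes. Otherwise single-vertex fibres over a non-smooth $R$ give counterexamples.
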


\begin{proof}
Let  $V_0$ be a quasi-periodic subset of $VY$ and let $V_n$ denotes the set of vertices at distance at most $n$ from $V_0$ in $VT$. This set admits an equivariant finite-to-1 retraction onto $V_0$ and it follows by Lemma \ref{L - qp  finite to 1} that $V_n$ is quasi-periodic. Since $VT=\bigcup_{n\geq 1} V_n$, Lemma \ref{L - qp spaces increasing unions} implies that $VT$ is quasi-periodic. \end{proof}

We conclude with an important result, which I liked to phrase in these terms in my PhD thesis \cite{qp}, but which is of course just a reformulation of the results of Gaboriau in  \cite{Gcost,Gaboriau}.

\begin{proposition} \label{P - treeable qp} An equivalence relation is treeable if and only if it admits a quasi-periodic tree.
\end{proposition}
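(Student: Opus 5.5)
The proposition asks for two implications, and I will prove them separately. Throughout, negligible sets are discarded in the pmp setting as the paper allows.

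\emph{Treeable implies a quasi-periodic tree.} Let $K\subset R$ be a treeing. Define the $R$-graph $T$ by setting $T(x)$ to be the graph with vertex set the $R$-class $[x]$ and edge set $K\cap([x]\times[x])$. The action $T(x,y)$ is the identity map between the (equal) fibres; since $(x,y)\in R$, the vertex sets $[x]$ and $[y]$ coincide. Each $T(x)$ is a tree because $K$ is a treeing.

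It remains to check quasi-periodicity.

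The vertex space $VT(X)=\{(x,v): v\in[x]\}$ is just $R$ with the action $(x,y)\cdot(y,v)=(x,v)$. The diagonal $\{(v,v)\}$ meets each orbit exactly once, so it is a fundamental domain.

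Similarly, $ET(X)=\{(x,e):e\in K,\ e\subset[x]\}\cong R\times_X K$ up to reindexing. The set $\{(\s(e),e): e\in K\}$ is a Borel fundamental domain. Alternatively, this follows from the lemma stating that a simplicial graph with quasi-periodic vertex set has quasi-periodic edge set. So $T$ is a quasi-periodic tree.

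\emph{A quasi-periodic tree implies treeable.} Let $T$ be a quasi-periodic $R$-tree with fundamental domains $V_0\subset VT(X)$ and $E_0\subset ET(X)$. I follow Gaboriau's idea: the orbit space $VT(X)$ carries the smooth equivalence relation $R_{VT}$, and $R$ is (stably) the relation generated by $T$-adjacency, restricted back to a transversal.

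Concretely, consider the equivalence relation $\tilde R$ on the standard Borel space $VT(X)$ defined by $p\sim q$ iff $\pi(p)R\pi(q)$ after transporting $q$ into the fibre of $p$. Equivalently, $\tilde R$ is the relation $R*_X VT$ acting on $VT(X)$.

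The fundamental domain $V_0$ gives a Borel map $VT(X)\to X$, $p\mapsto \pi(p)$, and identifies $X$ with a complete section. More precisely, I would use the groupoid $R\ltimes VT$, whose arrows are pairs (arrow of $R$, vertex). Its orbit relation on $V_0$ restricts to $R$ via the projection: each $x$ contributes the vertices $V_0\cap VT(x)$, and $R$-equivalence of base points matches this relation.

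The main obstacle is now to produce an honest treeing of $R$ on $X$ from this data.

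The plan is as follows. Use the edges of the trees: fix the fibre $T(x)$, and take the graphing on $V_0$ obtained from the edges of $T$ translated into $V_0$ via the unique $R$-arrows. This gives a Borel graphing $\mathcal G$ of a relation $\tilde S$ on $V_0$. I then need two facts about $\mathcal G$: it is a treeing, because its components are images of the trees $T(x)$ under a bijective identification of $VT(x)$ with $V_0\times$ (arrows); and it generates the relation that $R$ induces on $V_0$.

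Then $\tilde S$ is treeable and is stably (in fact via a countable-to-one correspondence with finite or countable vertex stabilizers trivial, since $R$ is a relation) isomorphic to $R$. Here I would invoke Gaboriau's results \cite{Gcost,Gaboriau}: treeability passes to restrictions and to relations induced on complete sections via countable-to-one extensions. I expect that this transfer, from a treeing of the vertex-orbit relation back to a treeing of $R$ on $X$, is where the real work lies.
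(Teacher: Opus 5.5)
Your forward direction is the paper's argument: the Cayley graph of a treeing, with vertex space $R$ and the diagonal as fundamental domain. Your converse follows the route of the Remark after the paper's proof. You identify $VT(X)/R$ with $V_0$, put on it the treeable relation whose classes are the vertex sets of the trees, note that it is stably isomorphic to $R$, and invoke Gaboriau's Prop.~II.6. (The main text's embedding into the universal complex $ER$ rests on the same fact.)

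The strategy is right, but one identification in your setup is false, and it is the point that matters. The relation of $R*_X VT$ acting on $VT(X)$ is the orbit partition $R_{VT}$. Its restriction to the fundamental domain $V_0$ is trivial, so it does not ``restrict to $R$''. The relation you need, and the one your transported graphing $\mathcal G$ actually generates, is the pullback $\tilde S=\{(p,q)\in V_0\times V_0 : (\pi(p),\pi(q))\in R\}$.

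Your argument that $\mathcal G$ is a treeing of $\tilde S$ should rest explicitly on $R$ being principal. The only arrow from $x$ to $x$ is $(x,x)$, so an $R$-orbit meets each fibre $VT(x)$ at most once. Hence sending a vertex to its orbit representative maps $VT(x)$ injectively onto $V_0\cap\pi^{-1}([x])$. By equivariance of the action on edges, this carries $T(x)$ isomorphically onto the corresponding $\mathcal G$-component.

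Once $\tilde S$ is written this way, the transfer you call the real work is routine. A Lusin--Novikov section of the countable-to-one map $\pi|_{V_0}$ gives a Borel $A\subset V_0$ with $\pi\colon \tilde S|_A\cong R|_{\pi(V_0)}$, where $A$ and $\pi(V_0)$ are complete sections. The only nonelementary input is then that treeability passes to restrictions, which is the same citation the paper relies on.
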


 In fact, in terms of quasi-periodic spaces, the definition of treeability is precisely the requirement that there exists a quasi-periodic tree whose vertex set is precisely $R$; the above lemma adds some flexibility on the possible vertex sets, as long as they are quasi-periodic. It is very convenient to allow for larger vertex sets in the context of the almost stability theorem.

\begin{proof}
The equivalence follows immediately by \cite{Gaboriau}. Namely, if $Y$ is a quasi-periodic tree, we may embed a fundamental domain $V_0Y$ of $VY$ into a fundamental domain of the vertex set of the universal $R$-simplicial complex $ER$ (defined in \cite[\ts 2.2.1]{Gaboriau}). This embedding extends to an equivariant embedding of $VY(X)$, and identifies the $R$-tree $Y(X)$ with a 1-dimensional  contractible simplicial $R$-complex in $ER$, which implies that $R$ is treeable by \cite{Gaboriau}. The converse is clear since the Cayley graph of a treeing is a quasi-periodic tree, with vertex set precisely equal to $R$.
\end{proof}

\begin{remark} The proof of the fact that $R$ is treeable if it admits 1-dimensional  contractible $R$-simplicial complex  is not written explicitly in \cite[\ts 2.2.1]{Gaboriau}; however, it follows by \cite[Prop.\ II.6]{Gcost}, since $R$ is stably isomorphic to the treeable equivalence relation on the standard Borel space $VY(X)/R$, whose classes are the vertex sets of the trees $Y^x$, $x\in X$. 
\end{remark}

We conclude with this section with a brief discussion of the ``probabilitic approach'' to quasi-periodic spaces. 

Let $Y$ be a quasi-periodic space. As discussed in \cite{qp}, we may view the functor $Y$ as a \emph{random variable}, with each fibre $Y^x$, $x\in X$, being a \emph{realization}, in the probabilistic sense of the word, of the quasi-periodic space $Y$. This is an analog of an elementary event in probability theory or a concrete realization of a random variable. 

In this approach, a \emph{point} in $Y$ is the choice of a point in every realization $Y^x$. Thus, in standard terminology, a point in $Y$ is simply a section in $\G(Y)$. If $Y$ is a quasi-periodic graph, or more generally a field of graphs, we call  \emph{vertex} in $Y$  an element of $\G(VY)$ and  \emph{edge} in $Y$ an element in $\G(EY)$. By the standard selection theorems, the vertex set of a countable quasi-periodic graph is a union of countably many vertices, which can moreover be chosen to uniformly apart from each other: 

\begin{lemma}
Suppose $Y$ is a field of countable graphs. There exists a countable set of vertices $v_0,v_1,v_2,\ldots $ of $Y$ such that 
\begin{enumerate}
\item $Y^x=\{ v_0^x, v_1^x,v_2^x,\ldots\}$ for every $x\in X$;
\item $d_\infty(v_n,v_0)<\infty$
\end{enumerate} 
where $d_\infty(v_n,v_0):=\sup_{x\in X} d^x(v_n^x,v_0^x)$ denotes the uniform graph distance between vertices. 
\end{lemma}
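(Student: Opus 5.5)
The plan is to combine the Lusin--Novikov uniformization theorem with a decomposition according to the distance to a base vertex. I will assume, as is implicit in condition (2), that the fibres $Y^x$ are nonempty and connected; otherwise $d^x$ can be infinite and (2) cannot hold.

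\textbf{Step 1: enumerate the vertices by partial sections.} The projection $\pi\colon VY(X)\to X$ is Borel with countable fibres. By the Lusin--Novikov theorem, $VY(X)$ is a countable disjoint union of Borel graphs of partial sections $s_k\colon A_k\to VY$, $k\geq 1$, with $A_k\subset X$ Borel. Since the fibres are nonempty, $\pi(VY(X))=X$. Uniformizing once more gives a full Borel section, which we call $v_0\in\G(VY)$.

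\textbf{Step 2: Borel measurability of the fibrewise distance.} This is the step I expect to be the main technical point. Applying Lusin--Novikov to $EY(X)$ shows that the set
\[
D_1=\{(p,q)\in VY\times_X VY : p=q \text{ or } p,q \text{ are adjacent}\}
\]
is Borel, since it is the image of $EY(X)$ under a countable-to-one Borel map, together with the diagonal. Inductively,
\[
D_{m+1}=\{(p,q) : \exists\, w,\ (p,w)\in D_m,\ (w,q)\in D_1\}
\]
is the projection of a Borel set along the countable-fibred coordinate $w$. It is therefore Borel, again by Lusin--Novikov, because projections with countable sections are Borel. Hence $(p,q)\mapsto d^x(p,q)$ is a Borel function on $VY\times_X VY$.

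\textbf{Step 3: split by distance and extend.} For $k\geq 1$ and $m\geq 0$, set
\[
A_{k,m}=\{x\in A_k : d^x(s_k(x),v_0(x))=m\},
\]
which is Borel by Step 2. Define the full section $v_{k,m}$ by
\[
v_{k,m}(x)=s_k(x) \text{ for } x\in A_{k,m}, \qquad v_{k,m}(x)=v_0(x) \text{ otherwise}.
\]
It is Borel, being glued from Borel pieces on a Borel partition of $X$. By construction $d_\infty(v_{k,m},v_0)\leq m<\infty$, which gives (2).

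\textbf{Step 4: covering.} Let $x\in X$ and $p\in Y^x$ be a vertex. Then $p=s_k(x)$ for some $k$ with $x\in A_k$. Connectedness of $Y^x$ gives $m:=d^x(p,v_0(x))<\infty$, so $x\in A_{k,m}$ and $p=v_{k,m}(x)$. Conversely, each $v_{k,m}(x)$ is a vertex of $Y^x$. Re-enumerating the countable family $\{v_0\}\cup\{v_{k,m}\}$ as $v_0,v_1,v_2,\ldots$ gives (1).
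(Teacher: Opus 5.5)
Your proposal is correct and follows essentially the paper's argument. Like the paper, you take Lusin--Novikov partial sections $s_k\colon A_k\to VY$, split each $A_k$ according to the distance from $s_k(x)$ to $v_0(x)$, and extend by $v_0$ off the domain; the splitting is what the paper's ``restricting $A_k$ if necessary'' has to mean, since mere restriction would lose the covering property (1). Your Step~2 (Borelness of the fibrewise distance) and your explicit assumption that the fibres are nonempty and connected are left tacit in the paper, and both are genuinely needed for (2) to make sense.
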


\begin{proof}
Let $v_0\in \G(Y)$ be a an arbitrary vertex in $Y$. Since the map $VY\to X$ is countable-to-1, the Lusin--Novikov theorem provides a countable  family of sections $w_k\colon A_k\to VY$ whose graph partition $VY$. Furthermore, we may assume by restricting $A_k$ if necessary that  $\|w_k-{v_0}_{|A_k}\|_\infty <\infty$ for all $k$. Then the family of vertices $v_k$ defined by $v_k(x)=w_k(x)$ if $x\in A_k$ and $v_k(x)=v_0(x)$ if $x\not\in A_k$ verifies (1) and (2). 
\end{proof}

If $A$ is a subset of $X$, a section $v\colon A\to VY$ in $\G((VY))$ is called a \emph{partial vertex} of $Y$. A partial vertex may or may not occur in a given concrete realization $Y^x$. If $\mu$ is an invariant probability measure on $X$, then $\mu(A)$ represents the \emph{probability of occurrence} of the partial vertex $v$ in the quasi-periodic graph $Y$.

By subset of $Y$, we mean a (Borel) subset of $Y(X)$. Let $Z$ be a subset of $Y$. We  define the stabilizer of $Z$ to be
 \[
 R_Z=\{(u,v)\in Z\times Z : \exists (x,y)\in R \  (x,y)v=u\}.
 \]
Thus, $R_Z$ is by definition the restriction to $Z$ of the orbit partition of $R$ acting on $Y$. This notation is consistent with the notation $R_Y$ introduced earlier.

If $Y$ is an $R$-space, we say a subset $Z$ of $Y$ is free (resp., smooth) if $R_Z$ is trivial (resp., smooth). The $R$-space spanned by $Z$, also called the saturated of $Z$ under the action of $R$, is denoted $RZ$. Thus, by definition, $RZ$ is the union of all $R$-orbits intersecting $Z$. This is an invariant (Borel) subset of $Y$.

If $v\colon A\to Y$ is a partial point of $Y$ (e.g., a partial vertex in an $R$-graph), then we use the notation $R_v$ for the stabilizer of $v$ (i.e., the stabilizer of $X=v(A)$), and $Rv$ for the $R$-space spanned by $v$. 
Since $v$ is injective, $R_v$ can be pulled back to a subrelation of $R$ defined on $A$, namely, $R_v=\{(x,y) : (x,y)v^y=v^x\}\subset A\times A$.  If $Y$ is quasi-periodic, then all partial points in $Y$ are smooth by definition, and all partial points in a fundamental domain of $Y$ are free. Furthermore, it is straightforward to verify that a countable $R$-set $Y$ is quasi-periodic iff it is a union of countably many smooth points:

\begin{lemma}\label{L - qp sections}
Let $V$ be a countable $R$-space, let $v_1,\ldots, v_n,\ldots \in \G(V)$, and let $V_n=Rv_n$ denote the $R$-space spanned by $v_n$. Then the $R$-space $\bigcup_{n\geq 1} V_n$ is quasi-periodic if and only if $V_n$ is quasi-periodic for every $n\geq 1$. 
\end{lemma}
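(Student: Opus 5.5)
The forward direction will follow from the stability results already established. If $\bigcup_{n\geq 1} V_n$ is quasi-periodic, then each $V_n=Rv_n$ is an invariant Borel subspace of it. Indeed, $Rv_n$ is the saturation of the Borel set $v_n(X)$; it is Borel because the action has countable fibres and the Lusin--Novikov theorem applies. Lemma \ref{L - qp invariant subspace} then shows that each $V_n$ is quasi-periodic.

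For the converse I will build the union in two steps. Put $W_N:=V_1\cup\cdots\cup V_N$. By induction on $N$, using Lemma \ref{L - qp stable by union and intersection} (with ambient $R$-space $V$), each $W_N$ is a quasi-periodic subspace of $V$. The sequence $(W_N)_{N\geq 1}$ is increasing, so its union $\bigcup_{n\geq 1}V_n$ is quasi-periodic by Lemma \ref{L - qp spaces increasing unions}.

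As a safety check, and to make the fundamental domain explicit, I would also give a direct construction that avoids the direct-limit formalism. Each $V_n$ is quasi-periodic and therefore has a fundamental domain $F_n\subset V_n$. Set $U_n:=V_n\setminus (V_1\cup\cdots\cup V_{n-1})$. This is an invariant Borel set, being a difference of saturated sets, so $F_n\cap U_n$ is a fundamental domain for $U_n$ by Lemma \ref{L - qp invariant subspace}. The $U_n$ partition the union into invariant pieces, so $\bigsqcup_n (F_n\cap U_n)$ meets every orbit exactly once. Indeed, every orbit lies in a unique $U_n$, and within $U_n$ it meets $F_n\cap U_n$ exactly once.

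The only point that needs any care is the Borel measurability of the saturations $Rv_n$, and hence of the $U_n$. This is the standard fact that the saturation of a Borel set under a countable Borel equivalence relation is Borel. It applies here because the orbit relation $R_V$ has countable classes: $R$ has countable classes and $V$ has countable fibres. Everything else is routine bookkeeping with the lemmas above.
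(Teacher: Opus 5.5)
Your proof is correct and follows the paper's argument: the paper also gets one direction from Lemma \ref{L - qp invariant subspace}, and for the other it forms $W_n=V_1\cup\cdots\cup V_n$ via Lemma \ref{L - qp stable by union and intersection} and passes to the increasing union via Lemma \ref{L - qp spaces increasing unions}. Your direct construction with the invariant pieces $U_n=V_n\setminus(V_1\cup\cdots\cup V_{n-1})$ and the fundamental domain $\bigsqcup_n(F_n\cap U_n)$ is also valid and avoids the direct-limit formalism, and your remark on the Borel measurability of the saturations $Rv_n$ fills in a point the paper leaves implicit.
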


\begin{proof} Suppose each $V_n$ is quasi-periodic. 
Then $W_n:=\bigcup_{m\leq n} V_n$ is quasi-periodic by Lemma \ref{L - qp stable by union and intersection}. Therefore, $\bigcup_{n\geq 1} V_n=\bigcup_{n\geq 1} W_n$ is quasi-periodic by Lemma \ref{L - qp spaces increasing unions}. The converse follows by Lemma \ref{L - qp invariant subspace}.\end{proof}

I first encountered quasi-periodic sets when studying the concentration of measure property in the sense of Gromov \cite{Gromov} (i.e., for foliations and mm-spaces) and the connections beetween this property (which were suggested by Gromov) to amenability and Kazhdan's property T. In the context of \cite{Gromov},  it was helpful, in order to establish concentration, or the lack thereof, to view a foliation of a compact Riemannian manifold (say) as a quasi-periodic Riemannian manifold $Y$. The concentration property is easier to visualize in terms of relative distance between quasi-periodic subsets of a quasi-periodic space, rather than in the ambient compact manifold or mm-space. To my surprise, no notion of quasi-periodic space seemed to exist in the mathematical literature. The theory of almost periodic functions of von Neumann, and the theory of aperiodic tilings, provided related ideas, but they were not well adapted to measure theory and concentration.  

Two different quasi-periodic spaces may be subject to the ``same'' notion quasi-periodicity (i.e., patterns repeat in both spaces in the ``same'' way). We will posit here (as was done in \cite{qp}) that the ``underlying concept of quasi-periodicity'' for a space, or in other words, the underlying structure that regulates the quasi-periodicity of a space, is a stable isomorphism classes of  equivalence relations. Let us first recall the definition of stable isomorphism. 

\begin{definition}
 A pmp equivalence relation $R$ is \emph{stably isomorphic} to a pmp equivalence relation $R'$ if there exists subsets of their definition domains  which intersect almost every class, say $X_0\subset X$ and $X_0'\subset X'$, and such that the restriction $R_{|X_0}$ and $R'_{|X_0'}$ are isomorphic. 
\end{definition}

Thus, if $Y$ is $R$-quasi-periodic, $Y'$ is $R'$-quasi-periodic, and $R$ is stably isomorphic to $R'$, then $Y$ and $Y'$ are considered to be quasi-periodic ``in the same way''. In fact, the random variable $Y$ can be viewed as a variable defined on the quotient space $\Omega=X/R$, namely, $Y\colon \Omega\to \Sigma$, and a stable equivalence class of equivalence relation can be viewed as a singular space structure on $\Omega$ (in the sense of \cite{Connes}). Following this point of view, the space $Y$ was said to be an $\Omega$-periodic space in \cite{qp} rather than a quasi-periodic $R$-space. 

The definition of a ``concept of quasi-periodicity'' as a stable isomorphism class of equivalence relation, is of course parallel to Mackey's definition of a ``virtual subgroup'' as a similarity class of groupoids \cite{Mackey}, and it was defined in this way in \cite{qp} by analogy. 

We shall only need the following lemma on this topic.

\begin{lemma}
Let $X_0\subset X$ be a subset intersecting every $R$-class, and  let $Y$ be an $R$-space. Then $Y$ is $R$-quasi-periodic if and only if $Y_{|X_0}$ is $R_0$-quasi-periodic, where $R_0:=R\cap X_0\times X_0$ denotes the restriction.  
\end{lemma}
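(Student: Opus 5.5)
We treat the two directions separately, working directly with fundamental domains. Throughout, $Y_{|X_0}$ denotes the restriction of the field $Y(X)$ to $\pi^{-1}(X_0)$, with the action of $R_0$. Every $R_0$-orbit in $Y_{|X_0}$ is contained in an $R$-orbit in $Y$. Conversely, every $R$-orbit in $Y$ meets $Y_{|X_0}$, because $X_0$ meets every $R$-class: if $p\in Y^y$ and $x\in X_0$ with $(x,y)\in R$, then $(x,y)p\in Y^x$. Moreover, two points of $Y_{|X_0}$ lying in the same $R$-orbit lie in the same $R_0$-orbit, since the arrow relating them has both endpoints in $X_0$ and so belongs to $R_0$. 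Hence the $R$-orbits of $Y$ are in canonical bijection with the $R_0$-orbits of $Y_{|X_0}$, by intersecting with $Y_{|X_0}$.

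For the easy direction, suppose $F\subset Y_{|X_0}$ is a Borel fundamental domain for $R_0$. By the orbit correspondence above, $F$ meets each $R$-orbit of $Y$ exactly once, so $F$ is already a fundamental domain of $Y$ for $R$. Hence $Y$ is $R$-quasi-periodic.

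For the converse, let $Y_0$ be a fundamental domain of $Y$ for $R$. We must transport $Y_0$ into the fibres over $X_0$ in a Borel way. The plan is to choose a Borel map $\phi\colon X\to X_0$ with $(\phi(x),x)\in R$ for all $x$, and to set
\[
F:=\{(\phi(y),y)p : p\in Y_0\cap Y^y,\ y\in X\}.
\]
To construct $\phi$, apply Lusin--Novikov to the countable-to-one projection $\{(x,y)\in R: x\in X_0\}\to X$, $(x,y)\mapsto y$. This projection is surjective because $X_0$ meets every class, so Lusin--Novikov gives a Borel section $y\mapsto(\phi(y),y)$. The map $p\mapsto(\phi(\pi(p)),\pi(p))p$ is Borel and injective on $Y_0$: two points of $Y_0$ with the same image lie in the same $R$-orbit and are therefore equal. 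It is also countable-to-one. Hence $F$ is Borel, being the injective Borel image of a Borel set (or by Lusin--Souslin).

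It then remains to check that $F$ is a fundamental domain for $R_0$. Each $R_0$-orbit corresponds to an $R$-orbit, which contains a unique $p\in Y_0$, and the image of $p$ lies in that same orbit and in $Y_{|X_0}$, so $F$ meets every $R_0$-orbit. For uniqueness, two elements of $F$ in the same $R_0$-orbit come from points of $Y_0$ in the same $R$-orbit, so these points are equal and hence so are their images. In the pmp setting, all of this is done up to null sets.

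The only step requiring care is the Borel measurability of $\phi$ and of $F$. This is handled by the standard selection theorems, exactly as in the proof of the lemma on countable families of vertices above. The rest is formal bookkeeping with the orbit correspondence.
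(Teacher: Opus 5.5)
Your proof is correct and follows the paper's approach. The paper also transports a fundamental domain of $Y$ into the fibres over $X_0$, via a partition $X=\bigsqcup_k X_k$ and partial inner maps $\theta_k\colon X_k\to X_0$ (in effect your Borel map $\phi$), and it also shows that an $R_0$-fundamental domain of $Y_{|X_0}$ is already one for $R$. Your single Lusin--Novikov section, together with Lusin--Souslin for the Borelness of $F$, replaces the paper's injective pieces, and your orbit-correspondence bookkeeping is, if anything, cleaner.
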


\begin{proof} Since $X_0$ intersects every class, we may find a  partition $X=\bigsqcup_{k=0}^N X_k$ (where $N$ may be infinite) and a family $(\theta_k\colon X_k\to X_0)_{k=0}^N$  of partial inner automorphisms of $R$, where $\theta_0$ is the identity on $X_0$ and $\theta_k$ is, by definition, an embedding whose graph is included in $R$.

Suppose $Y$ is $R$-quasi-periodic and let $Y_0$ be a fundamental domain. For every $x\in X_0$ we let
\[
\widetilde Y_0^x=\bigcup_{k=0, \ \ x\in \ran \theta_k}^N (x,\theta_k^{-1}(x))Y_0^{\theta_k^{-1}(x)}.
\]
We claim that $\widetilde{Y_0}_{|X_0}$ is a fundamental domain for $Y_{|X_0}$. Let $x\in X_0$ and $y\in Y^x$. Since $Y_0$ is a fundamental domain, there exist $y_0\in Y_0$ and $r\in R$ such that $y=ry_0$. Let $k$ denote the unique integer such that $\s(r)\in X_k$. Let $x_0=\theta_k(s(r))\in X_0$ and let $\tilde y_0:=(x_0,s(r))y_0$. Then $(x,x_0)\in R_0$ and $\tilde y_0\in \widetilde Y_0^x$ since $\tilde y_0=(x_0,\theta^{-1}(x_0))y_0$.  Furthermore, $(x,x_0)\tilde y_0=(x,\theta^{-1}(x_0))y_0=ry_0=y$. Let us now prove uniqueness. Suppose $y,z\in \tilde Y_0$ verify $y=rz$ for some $r\in R_0$. Write $y=(x_0,\theta_k^{-1}x_0)y_0$ and $z=(x_0',\theta_l^{-1}x_0')z_0$. Then $(x_0,\theta_k^{-1}x_0)y_0=r(x_0',\theta_l^{-1}x_0')z_0$ and since $y_0,z_0\in Y_0$, we deduce: $(x_0,\theta_k^{-1}x_0)=r(x_0',\theta_l^{-1}x_0')$ and $y_0=z_0$. In particular, $\theta_k^{-1}x_0=\theta_l^{-1}x_0'$. Since the domains of $\theta_k$ and $\theta_l$ are disjoint for $k\neq l$, this implies $k=l$. Thus, $x_0=x_0'$. This shows that $y=z$ and $r=1$.

Suppose conversely that $Y_{|X_0}$ is quasi-periodic and let $Y_0$ be a fundamental domain.  We claim that $Y_0$ is a fundamental domain for $Y$. Let $x\in X_k$ and let $y\in Y^x$. Since $Y_0$ is a fundamental domain for $R_0$, there exists  $x_0\in X_0$ and $y_0\in Y_0$ such that $(x_0,\theta_k(x))\in R_0$ and $y_0=(x_0,\theta_k(x))(\theta_k(x),x)y$. Thus, $y=r^{-1}y_0$ where $r=(x_0,x)\in R$ and $y_0\in Y_0$. Let us prove uniqueness. Suppose $y=rz$ where $y,z\in Y_0$ and $r\in R$. Since $y,z\in Y_0$ we have $\s(r),\r(r)\in X_0$.   Therefore,  $r\in R_0$. Since $Y_0$ is a fundamental domain for $R_0$, this implies $r=1$ and therefore $y=z$.
\end{proof}

We will not develop quasi-periodicity further on this occasion and will conclude with a remark regarding Def.\ \ref{D - qp space}.  The fact that a quasi-periodic space is defined to be a field of spaces (or a random variable) rather than a single space in the category $\Sigma$ endowed with a ``quasi-periodic structure'', might seem unnatural a priori, but it is justified in concrete examples. Thus, an aperiodic tiling generates a quasi-periodic space in the sense above, by considering the closure of its translates in the local topology. In the case of Penrose's tilings, the kites and darts naturally generates uncountably many pairwise non isomorphic aperiodic tilings rather than a single space.  For tilings, $\Sigma$ can be chosen to be the category of  metric flat 2-complexes, $R$ is a topological equivalence relation, and a minimality assumption (meaning that every class is dense) ensures that every patch in any given realization, can be found in every other realization. In this case, $\Omega=X/R$ is  a ``topological concept of quasi-periodicity'' for the tiling rather than a ``measure-theoretic concept of quasi-periodicity'' that can be used, for instance, to study the concentration of measure property. In a measure theoretic context, an ergodicity assumption replaces the minimality assumption and will ensure that every patch can be found in almost every realization. In probabilistic terms, when $R$ is ergodic, the singular probability space $\Omega$ has a single ``generic point'' $\omega\in \Omega$, viewed as ``the'' random point in $\Omega$. and the quasi-periodic space $Y$ can be viewed as having a single ``generic'' realization $Y(\omega)$, called ``the'' random realization of $Y$. Any two such realizations are locally indistinguishable one from another.

\section{Treeable finite index subrelations} \label{S - treeable finite index}

 In this section we prove the following result. 

\begin{theorem}\label{T - finite index stable almost equality classes}
An equivalence relation that contains a treeable subrelation of finite index admits a quasi-periodic stable almost equality class over a quasi-periodic countable set.
\end{theorem}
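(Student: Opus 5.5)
The plan is to use the dual, arrow-reversing construction of Dicks--Dunwoody: a tree gives an almost equality class over its edge set, one element for each vertex. The work is to make everything $R$-equivariant and quasi-periodic, even though the tree only lives on $S$-classes.

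\emph{Setup.} Let $S\subset R$ be treeable of finite index, and let $K\subset S$ be a treeing. Regard $K$ as symmetric, so each geometric edge appears with both orientations. For $x\in X$ put
\[
V^x:=[x]_R,\qquad E^x:=\{(a,b)\in K : a,b\in [x]_R\}.
\]
Let $(x,y)\in R$ act by the identity, since $V^x=V^y$ and $E^x=E^y$. This makes $V$ and $E$ countable $R$-sets. Both are quasi-periodic:
\begin{itemize}
\item $\{(x,x)\}$ is a fundamental domain for $V$;
\item $\{(x,(x,b)) : (x,b)\in K\}$, i.e.\ edges based at their source, is a fundamental domain for $E$.
\end{itemize}
The graph $(V^x,E^x)$ is a forest. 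Its components are the $K$-trees on the $S$-classes $C\subset [x]_R$, and there are at most $[R:S]<\infty$ of them.

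\emph{The almost equality class.} For a tree $T_C$ and a vertex $v\in C$, let $u_v\in (E^x,\ZI_2)$ be the indicator of the oriented edges of $T_C$ pointing towards $v$. For $v,w\in C$ we have $u_v\td u_w=$ the oriented edges on the geodesic $[v,w]$, a finite set. Now choose one vertex $v_C$ in each $S$-class $C\subset[x]_R$ and set $u=\sum_C u_{v_C}$. Changing the choices changes $u$ only by finitely many edges, because there are finitely many trees. So the almost equality class $A^x\subset (E^x,\ZI_2)$ of such $u$ depends only on $[x]_R$, and it is $R$-stable since $R$ acts trivially inside a class.

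Each fibre $A^x$ is countable, since $E^x$ has countably many finite subsets. The field $A$ is Borel: it is the image of the Borel map
\[
(\text{tuple }(v_C),\ \text{finite subset }F\subset E^x)\longmapsto u_{(v_C)}\td F,
\]
which has countable fibres. So $A$ is a countable $R$-set sitting in an almost equality class over the quasi-periodic set $E$.

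\emph{Quasi-periodicity of $A$ (the main point).} I would build an $R$-morphism from $A$ to a quasi-periodic space and apply Lemma \ref{L - qp  R  map }. Let $P$ be the $R$-space of nonempty finite subsets of $V$. It is quasi-periodic: $P_n$ (subsets of size $n$) is a finite-to-one image of an invariant subset of $V\times_X\cdots\times_X V$, so Lemmas \ref{L - qp direct product}, \ref{L - qp invariant subspace} and \ref{L - qp  finite to 1} apply, and then Lemma \ref{L - qp spaces increasing unions} handles $P=\bigcup_n P_n$.

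For $w\in A^x$ and an $S$-class $C$, let $w_C$ be the restriction of $w$ to the edges of $T_C$, and let
\[
M_C(w):=\operatorname{argmin}_{v\in C}\ |w_C\td u_v|.
\]
This argmin is finite and nonempty. Indeed $w_C\td u_{v_C}=F$ is finite. Moreover $|u_v\td u_{v_C}|\ge d(v,v_C)$, so $|w_C\td u_v|\ge d(v,v_C)-|F|$. Hence only vertices within distance $2|F|$ of $v_C$ can be competitive. Set $M(w):=\bigcup_C M_C(w)\in P^x$. The map $w\mapsto M(w)$ is defined intrinsically from $w$ and $K$, hence Borel and equivariant, i.e.\ an $R$-morphism $A\to P$. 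Lemma \ref{L - qp  R  map } then gives that $A$ is quasi-periodic, which completes the proof of Theorem \ref{T - finite index stable almost equality classes}.

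I expect the main obstacle to be this last step. The choice of $M$ must be genuinely canonical, with no arbitrary choice of the $v_C$. One also has to check carefully the Borelness of the field $A$ and of the minimizer map; I would verify it using Lusin--Novikov enumerations of $E$ and of finite subsets of $E$.
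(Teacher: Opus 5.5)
Your argument is correct, up to one small repair described below, and it takes a genuinely different route from the paper in the decisive step.

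\textbf{Setup.} Your $E$ (the $K$-forest on each $R$-class, with $R$ acting by identities) is canonically the paper's induced set $R\otimes_S ET$. The paper builds that set from the Cayley tree of $S$ using choice functions $f_i$ and the index cocycle $\sigma$. Your class $A$ is likewise the paper's $\widetilde{R\otimes_S V}$. In your description $R$-stability is automatic, whereas the paper checks it with the cocycle computation of Lemma \ref{L - induction stable ae class}.

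\textbf{Quasi-periodicity in the paper.} The paper works over $S$. For a Borel section $\tilde v=v+a$ of the uniform class, with $a$ uniformly bounded, the stabilizer $S_{\tilde v}$ moves the vertex encoded by $v$ by at most $2N$. A circumcenter argument then gives an $S_{\tilde v}$-invariant vertex, hence an $S$-map $S\tilde v\to VT$. Countably many such sections exhaust $\widetilde V$ (Lemma \ref{L - qp sections}). The restriction $S$-map $\rho$ transfers this to $\widetilde{R\otimes_S V}$, and Lemma \ref{L - qp finite index} upgrades $S$-quasi-periodicity to $R$-quasi-periodicity.

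\textbf{Your route.} Your argmin map $M\colon A\to P$ replaces all of this by one canonical $R$-equivariant map defined on the whole class. A single use of Lemma \ref{L - qp  R  map } then suffices. There is no reduction to the uniform part of the class, no fixed points of stabilizers, and no detour through $S$. Finite index is used only to make $\sum_C u_{v_C}$ well defined up to finitely many edges.

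\textbf{What the paper's route buys.} It produces reusable tools for the project: induction $R\otimes_S(-)$ preserving quasi-periodicity and almost stability, and the bounded-orbit fixed-point mechanism familiar from Dicks--Dunwoody. These still apply to almost equality classes where no canonical minimizer is available.

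\textbf{The repair.} You deduce finiteness of $M_C(w)$ from the fact that competitors lie within distance $2|F|$ of $v_C$. A treeing need not be locally finite, so that ball may be infinite. The claim is nevertheless true:
\begin{itemize}
\item Put $\phi(v)=|w_C\td u_v|$. For an edge $e=\{v,v'\}$, let $e^{\to v}$ be its orientation pointing to $v$. Reading $w_C$ as a $0$--$1$ valued function, one has $\phi(v')-\phi(v)=2\bigl(w_C(e^{\to v})-w_C(e^{\to v'})\bigr)$.
\item Hence, across an edge where $w_C$ agrees with $u_{v_C}$, a step away from $v_C$ raises $\phi$ by exactly $2$.
\item Deleting the at most $|F|$ geometric edges that carry $F$ cuts $T_C$ into at most $|F|+1$ subtrees. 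On each of them, $\phi$ is minimized only at the vertex nearest to $v_C$.
\item Thus $|M_C(w)|\le |F|+1$.
\end{itemize}

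\textbf{Borel structure of $A$.} It is simplest to use Borel choice functions to define one Borel section $u_0$ of $A$. Then $A$ is the injective Borel image of $(x,F)\mapsto u_0^x\td F$, with $F$ ranging over finite subsets of $E^x$.
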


In particular, if the relation  is itself treeable, then it admits a quasi-periodic stable almost equality class over a quasi-periodic set. The converse to this statement follows  by Prop.\ \ref{P - treeable qp} assuming that the answer to Question \ref{Q - ast} is positive.

Let $R$ be an equivalence relation. We follow the notation in \cite{FSZ}. A subrelation $S$ of $R$ induces each $x\in X$ an equivalence relation on the $R$-class $R(x)$. We let $J(x)=R(x)/S$ be the quotient space. If $R$ is ergodic, then the function $|J(x)|$ is essentially constant. Furthermore,  there exists functions $f_i\colon X\to X$ such that for every $x\in X$, $\{S(f_i(x)) : 1\leq i\leq n\}$ forms a partition of $R(x)$, where $n=|J(x)|$. Functions satisfying these conditions are called \emph{choice functions} for the pair $S\subset R$  (see \cite[\ts 1]{FSZ}). We  assume that $f_1=1$ is the identity on $X$. We say that $S$ is a subrelation of index $n$ in $R$ if it admits $n$ choice functions. Since the function $|J(x)|$ is invariant, every finite index subrelation of $R$, i.e., such that $|J(x)|\leq N<\infty$ almost surely, is a  sum of at most $N$ components of $R$ in restriction to which $S$ is an index $n$ subrelation for some $n\leq N$, and each component can be treated separately.

Suppose $S$ is a subrelation of index $n<\infty$ in $R$.  Let $S_n$ denote the symmetric group. Let $\sigma\colon R\to  S_n$ be defined by $\sigma(x,y)(i)=j$ if and only if $S(f_i(y))=S(f_j(x))$. Then $\sigma\in Z^1(R,S_n)$ and $[\sigma]\in H^1(R,S_n)$ is independent of the choice functions $f_i$. The map $\sigma$ is called the \emph{index cocycle} of the inclusion (\cite[\ts 1]{FSZ}).

We first prove that inducing a quasi-periodic $S$-set provides a quasi-periodic $R$-set.  

We view $R$ as a right $R$-set, and the choice functions as  a fundamental set of sections for the right action of $S$ on $R$; thus, $R$ is quasi-periodic as a right $S$-set and 
\begin{align*}
R^x&=\bigsqcup_{i=1}^n (x,f_i(x))S\\
&=\bigsqcup_{i=1}^n \{(x,y) : (y,f_i(x))\in S\}
\end{align*}
 for every $x\in X$, and for every $(x,z)\in R$, there exists a unique index $i=1,\ldots, n$ such that $(f_i(x),z)\in S$. 

Let $E$ be an $S$-set. For each  $x\in X$ we  define
\[
(R\otimes_S E)^x:=\bigsqcup_{i=1}^n (x,f_i(x))\otimes E^{f_i(x)}.
\]
These sets form a field $R\otimes_S E$ over $X$ which is naturally an $R$-set under the action:
\[
(x,y)(y,f_i(y))\otimes e:=(x,f_j(x))\otimes (f_j(x),f_i(y)) e
\]
for each $(x,y)\in R$, each $i=1,\ldots, n$,  each $e\in E^{f_i(y)}$, and $j=\sigma(x,y)(i)$ is the unique index such that $(f_i(y),f_j(x))\in S$. 

\begin{lemma}
If $E$ is a quasi-periodic countable $S$-set, then $R\otimes_S E$ is a quasi-periodic countable $R$-set.
\end{lemma}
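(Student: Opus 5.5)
The plan is to exhibit a fundamental domain for $R\otimes_S E$ directly from one for $E$. We then use Lemma \ref{L - qp finite index} to pass between $S$ and $R$, which is legitimate since $[R:S]=n<\infty$. By Lemma \ref{L - qp finite index} it suffices to show that $R\otimes_S E$, viewed as an $S$-set by restriction, is quasi-periodic. So we work with $S$ throughout.

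\textbf{Step 1 (decomposition).} Write $R\otimes_S E=\bigsqcup_{i=1}^n F_i$, where $F_i^x:=(x,f_i(x))\otimes E^{f_i(x)}$. Each $F_i$ is a Borel subfield, since the $f_i$ are Borel and the total space is Borel. Each $F_i$ is also $S$-invariant. Indeed, if $(x,y)\in S$, then $(f_i(y),f_i(x))\in S$ (in the generic situation where $\sigma$ restricted to $S$ fixes indices), and the action formula sends $(y,f_i(y))\otimes e$ to $(x,f_i(x))\otimes (f_i(x),f_i(y))e$. If $\sigma|_S$ is not trivial, we instead work with the $S$-invariant pieces $\bigsqcup_i F_i$ grouped along $\sigma|_S$-orbits. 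In that case we treat the whole field at once using the map $\Phi$ below, which avoids this issue. By the disjoint-sum remark in the proof of Lemma \ref{L - qp stable by union and intersection}, it suffices to find a fundamental domain on each piece.

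\textbf{Step 2 (comparison map).} Consider the Borel map $\Phi\colon R\otimes_S E\to E$ given by $(x,f_i(x))\otimes e\mapsto e\in E^{f_i(x)}$. This map is equivariant for $S$ in the twisted sense: the element $(x,y)\in S$ acts on the source, and $(f_j(x),f_i(y))\in S$ acts on the target. Let $E_0$ be an $S$-fundamental domain of $E$, and set $Y_0:=\Phi^{-1}(E_0)$.

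Existence is clear: every $(x,f_i(x))\otimes e$ has $e=s e_0$ with $s\in S$ and $e_0\in E_0$. Writing $s=(f_i(x),z)$, we translate by an element of $S$ at $x$ to land on a point whose $E$-component is $e_0$. For uniqueness, suppose two points of $Y_0$ are related by $(x,y)$. Then their $E$-components $e_0,e_0'$ are $S$-related, so $e_0=e_0'$ and the transporting element $(f_j(x),f_i(y))$ is trivial. This forces $f_j(x)=f_i(y)$.

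\textbf{Main obstacle.} The points $(x,y)$ with $f_j(x)=f_i(y)$ need not be trivial, so $Y_0$ is in general only a complete section with finite classes, not a fundamental domain. The fibres of this failure have size at most $n$ times the number of preimages of a point under the maps $f_i$, and we need this to be finite. This requires that each $f_i$ be finite-to-one, which may fail for an arbitrary choice function.

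I would try first to modify the choice functions so that each $f_i$ is injective; for a finite index inclusion one can choose the $f_i$ as partial isomorphisms in the full group of $R$. Alternatively, one can observe that $Y_0$ is a Borel complete section on which $R_{Y}$ restricts to a countable Borel equivalence relation. If the restricted classes are finite, one picks a Borel transversal exactly as in the proof of Lemma \ref{L - qp  finite to 1}, and obtains a fundamental domain. Countability of the fibres of $R\otimes_S E$ is immediate from the fact that the disjoint union defining each fibre is finite.
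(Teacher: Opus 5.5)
Your proposal never produces a fundamental domain. Step 2 only shows that $Y_0=\Phi^{-1}(E_0)$ is a Borel complete section. The last step is then left conditional on one of two things, neither of which you establish:
\begin{enumerate}
\item[(a)] replacing the $f_i$ by injective choice functions, which you neither prove nor cite, and which would also require checking that the induced $R$-set is unchanged up to isomorphism when the choice functions change;
\item[(b)] the classes of $R_Y$ restricted to $Y_0$ being finite, which you do not verify and which can fail when some $f_i$ is infinite-to-one.
\end{enumerate}
Step 1 is not usable either. For $(x,y)\in S$ the permutation $\sigma(x,y)$ fixes the index $1$ but in general permutes the others, so the layers $F_i$ with $i\geq 2$ are not $S$-invariant.

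The obstacle is only apparent. The missing observation is that the first layer is already a fundamental domain. Since $f_1=1$, set $1\otimes E_0:=\{(x,x)\otimes e : e\in E_0^x\}\subset Y_0$.

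\emph{Existence.} Given $(x,f_j(x))\otimes h$, write $h=(f_j(x),y)e$ with $e\in E_0^y$. Because $(y,f_j(x))\in S$, we have $\sigma(x,y)(1)=j$, and the action formula gives $(x,y)\bigl((y,y)\otimes e\bigr)=(x,f_j(x))\otimes h$.

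\emph{Uniqueness.} The point $(x,y)\bigl((y,y)\otimes e\bigr)$ lies in the first layer only if $\sigma(x,y)(1)=1$, i.e.\ $(x,y)\in S$. In that case it equals $(x,x)\otimes (x,y)e$. If it lies in $1\otimes E_0$, then $(x,y)e$ and $e$ are two points of $E_0$ in the same $S$-orbit, so $x=y$.

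This is the paper's proof. It needs neither Lemma \ref{L - qp finite index} nor any condition on the $f_i$ beyond $f_1=1$. In your terms, $1\otimes E_0$ is an explicit Borel transversal for $R_Y$ restricted to $Y_0$, since the arrow $(f_i(x),x)$ carries $(x,f_i(x))\otimes e_0$ to $(f_i(x),f_i(x))\otimes e_0$. So the classes on $Y_0$ may be infinite without harm, and finite-to-one choice functions were never needed.
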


\begin{proof}
The fact $R\otimes_S E$ is an $R$-set is a straightforward computation which follows by the definition of the 1-cocycle $\sigma$.

Let us prove quasi-periodicity. Let $F$ be a fundamental domain for the $S$-action on $E$. Let $g=(x,f_j(x))\otimes h\in (R\otimes_S E)^x$.  Since $F$ is a fundamental domain, there exists a unique $y\in X$ and a unique $f\in F^y$ such that $(f_j(x),y)\in S$ and $h=(f_j(x),y)f$.  Thus,  
\[
g=(x,f_j(x))\otimes h=(x,f_j(x))\otimes  (f_j(x),y)f=(x,y)( (y,y)\otimes f).
\] 
showing that $1\otimes F$ intersects every $R$-orbit. 

To establish uniqueness, assume that $f$ and $g$ are two elements of $1\otimes F$ such that $f=(x,y)g$. Consider the unique index $j$ such that $(f_j(x),y)\in S$ and write $g=(y,y)\otimes h$ for $h\in F^y$. Then
\[
(x,y)g=(x,f_j(x))\otimes (f_j(x),y)h.
\] 
Since $f\in 1\otimes F$, this implies $j=1$, and  $f_j=f_1=1$ implies $(x,y)\in S$. Setting $f=(x,x)\otimes e$ for $e\in F^x$ we obtain that $(x,y)g=(x,y)((y,y)\otimes h)=(x,x)\otimes (x,y)h$. Thus, $e=(x,y)f$. Since $e\in F^x$ and $f\in F^y$ and $F$ is a fundamental domain for $S$, this implies $x=y$, and therefore $f=g$. 

It is obvious that $1\otimes F$ is measurable since $F$ is. This proves that $1\otimes F$ is a fundamental domain for $R$.
\end{proof}

Next we show that induction preserves almost stability.

Suppose $V$ sits in an almost equality class in $\G(E,\ZI_2)$. For each $v\in V$, consider the element $R\otimes_S v$ in $\G(R\otimes_S E,\ZI_2)$ defined by
\[
(R\otimes_S v)^x((x,f_i(x))\otimes e):=v^{f_i(x)}(e),\ \ \ e\in E^{f_i(x)}, \ \ \ i=1,\ldots,n.
\]
It is clear that $R\otimes_S v$ is a Borel section; we shall write $R\otimes_S V$ for the set of all such functions. 

\begin{lemma}\label{L - induction stable ae class} 
The set $R\otimes_S V$ sits in an almost equality class in $\G(R\otimes_S E,\ZI_2)$.  If furthermore $V$ sits in a uniform almost equality class, then so does $R\otimes_S V$. If  $V$ sits in an $S$-stable almost equality class, then $R\otimes_S V$ sits in an $R$-stable almost equality class in $\G(R\otimes_S E,\ZI_2)$, and the following equality holds
\[
|(x,y)(R\otimes_S v)^y\bigtriangledown(R\otimes_S v)^x|=\sum_{i=1}^n |((f_i(x),f_{\sigma(x,y)(i)}(y)) (v\mid E^{f_{\sigma(x,y)(i)}(y)}) \bigtriangledown (v\mid E^{f_i(x)})|
\]
for each $(x,y)\in R$, where $\sigma$ is the index 1-cocycle. 
\end{lemma}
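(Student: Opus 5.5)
The plan is to compute everything fibrewise, using the decomposition $(R\otimes_S E)^x=\bigsqcup_{i=1}^n (x,f_i(x))\otimes E^{f_i(x)}$. By definition, $R\otimes_S v$ restricted to the $i$-th block is $v^{f_i(x)}$, transported by the bijection $e\mapsto (x,f_i(x))\otimes e$. For the first assertion, take $u,v\in V$. The set $(R\otimes_S u)^x\td (R\otimes_S v)^x$ is the disjoint union over $i$ of the images of $u^{f_i(x)}\td v^{f_i(x)}$. This gives
\[
|(R\otimes_S u)^x\td (R\otimes_S v)^x|=\sum_{i=1}^n |u^{f_i(x)}\td v^{f_i(x)}|,
\]
which is finite since each term is finite and $n<\infty$. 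Taking suprema over $x$ bounds the left side by $n\|u\td v\|_\infty$. This settles the uniform statement at once.

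For the stability statement, I would first compute $(x,y)(R\otimes_S v)^y$ explicitly. Here the action on $(R\otimes_S E,\ZI_2)$ is $(x,y)w(g)=w((x,y)^{-1}g)=w((y,x)g)$. Fix a block $i$ of $(R\otimes_S E)^x$ and an element $(x,f_i(x))\otimes e$. By the defining formula of the action, $(y,x)$ sends it to $(y,f_j(y))\otimes (f_j(y),f_i(x))e$, where $j=\sigma(y,x)(i)$. By the cocycle identity this is the index with $\sigma(x,y)(j)=i$. Hence on block $i$ the function $(x,y)(R\otimes_S v)^y$ equals $(f_i(x),f_j(y))\,(v\mid E^{f_j(y)})$, transported to block $i$. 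Comparing with $(R\otimes_S v)^x$, which is $v\mid E^{f_i(x)}$ on block $i$, and summing over the disjoint blocks gives the displayed formula. The only care needed is the indexing convention, which I would match to the statement's $\sigma(x,y)(i)$; since $\sigma(x,y)$ is a permutation, the sum is unchanged under reindexing.

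It remains to show each summand is finite. Since $(f_i(x),f_j(y))\in S$ and $V$ is $S$-stable, $s v^{f_j(y)}$ lies in the same almost equality class as $v^{f_i(x)}$ for $s=(f_i(x),f_j(y))$. So each term is finite, and the total is finite. Together with the first part, this shows that $R\otimes_S V$ lies in one $R$-stable almost equality class, namely the class generated by $R\otimes_S V$ and its $R$-translates. One can also check that $R\otimes_S(sv)$-type translates stay in it.

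I expect the main obstacle to be bookkeeping. One must verify that the action formula and the cocycle $\sigma$ indeed match block $i$ at $x$ with block $\sigma(x,y)^{\pm1}(i)$ at $y$, and that the $S$-element appearing is $(f_i(x),f_{\sigma(x,y)(i)}(y))$, i.e.\ that $(f_i(x),f_{\sigma(x,y)(i)}(y))\in S$. This follows from the definition $S(f_i(y))=S(f_j(x))$ iff $\sigma(x,y)(i)=j$. I would check it first, possibly adjusting $\sigma$ to $\sigma^{-1}$ to fit the stated formula.
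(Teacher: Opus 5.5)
Your proposal is correct and follows essentially the paper's proof: you decompose $(R\otimes_S E)^x$ into blocks, compute $(x,y)(R\otimes_S v)^y$ blockwise from the action formula, and get finiteness of each summand from $S$-stability. One caution: reindexing by the permutation $\sigma(x,y)$ does not turn a pairing of block $i$ at $x$ with block $\sigma(y,x)(i)$ at $y$ into a pairing with block $\sigma(x,y)(i)$. So the displayed formula should be read with $j$ the unique index such that $(f_i(x),f_j(y))\in S$, which is your $j=\sigma(y,x)(i)$ and is exactly how the paper's proof characterizes $j$.
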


\begin{proof} We note first that $R\otimes_S V$ sits in an almost equality class.  If $u,v\in V$ and $x\in X$ then
\begin{align*}
&|(R\otimes_S v)^x\bigtriangledown(R\otimes_S u)^x|\\
 &\hskip2cm=\sum_{i=1}^n \sum_{e\in E^{f_i(x)}}(R\otimes_S v)^x((x,f_i(x))\otimes e)-(R\otimes_S u)^x ((x,f_i(x))\otimes e))\\
&\hskip2cm=\sum_{i=1}^n |(v\mid E^{f_i(x)})\bigtriangledown (u\mid E^{f_i(x)})|<\infty.
\end{align*}
 If furthermore $V$ sits in a uniform almost equality class, then so does $R\otimes_S V$, and we have an inequality
 \[
 \|R\otimes_S v\td R\otimes_S u\|_\infty \leq n \|u\td v\|_\infty
 \]
 by the above estimate.

Suppose $V$ is almost $S$-stable and let us check almost $R$-stability.
Let $v\in V$ and let $(x,y)\in R$. For each $i=1,\ldots, n$ we write $j=\sigma(x,y)(i)$. Thus, $j$ is an index which depends on $i$ and the unique index $j= 1,\ldots, n$ such that $(f_i(x),f_j(y))\in S$. We have
\begin{align*}
&|(x,y)(R\otimes_S v)^y\bigtriangledown(R\otimes_S v)^x|\\
&\hskip1cm=\sum_{i=1}^n \sum_{e\in E^{f_i(x)}}((x,y)(R\otimes_S v)^y)((x,f_i(x))\otimes e)-(R\otimes_S v)^x((x,f_i(x))\otimes e))\\
&\hskip1cm=\sum_{i=1}^n \sum_{e\in E^{f_i(x)}}(R\otimes_S v)^y((y,x)((x,f_i(x))\otimes e)-(R\otimes_S v)^x((x,f_i(x))\otimes e))\\
&\hskip1cm=\sum_{i=1}^n \sum_{e\in E^{f_i(x)}}(R\otimes_S v)^y((y,x)((x,f_i(x))\otimes e)-(R\otimes_S v)^x((x,f_i(x))\otimes e))\\
&\hskip1cm=\sum_{i=1}^n \sum_{e\in E^{f_i(x)}}(R\otimes_S v)^y((y,f_j(y))(f_j(y),f_i(x))\otimes e)-(R\otimes_S v)^x((x,f_i(x))\otimes e))\\
&\hskip1cm=\sum_{i=1}^n \sum_{e\in E^{f_i(x)}}v^{f_j(y)}((f_j(y),f_i(x)) e)-v^{f_i(x)}(e)\\
&\hskip1cm=\sum_{i=1}^n |((f_i(x),f_j(y)) (v\mid E^{f_j(y)}) \bigtriangledown (v\mid E^{f_i(x)})|
\end{align*}
which is finite because $v$ sits in an $S$-stable almost equality class.
\end{proof}

Suppose $V$ is almost $S$-stable. Then $R\otimes_S V$ is almost $R$-stable and the restriction map 
\begin{align*}
\rho\colon R\otimes_S V&\to \G(1\otimes E,\ZI_2)\\
R\otimes_S v&\mapsto (R\otimes_S v)_{|1\otimes E}
\end{align*} 
can be viewed as a map from $R\otimes_S V$ to $\G(E,\ZI_2)$ under the identification $1\otimes E\simeq  E$ taking $1\otimes e$ to $e$. Furthermore,  since $f_1=1$ we have 
\[
(R\otimes_S v)^x(1\otimes e)=v^x(e)
\]
for any $v\in V$ and any $e\in E^x$.
Thus, we may assume that $\rho$ is in fact the map of $R\otimes_S V$ into $V$ taking  $R\otimes_S v$ to $v$. 

Let $\widetilde{R\otimes_S V}$ denote the stable almost equality class in which $R\otimes_S V$ sits. Being the restriction map, $\rho$ extend to a map (again denoted $\rho$) of $\widetilde{R\otimes_S V}$ into the almost equality class  $\widetilde V$ of $V$.

\begin{lemma}\label{L - f is S equivariant}
The map $\rho\colon \widetilde{R\otimes_S V}\to \widetilde V$ is an $S$-map. In particular, if $\widetilde V$ is a quasi-periodic $S$-space, then $\widetilde{R\otimes_S V}$ is a quasi-periodic $R$-space.
\end{lemma}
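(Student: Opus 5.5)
The plan is to verify $S$-equivariance directly from the definitions, and then deduce quasi-periodicity from the lemmas of \ts\ref{S - qp spaces}.

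\textbf{Equivariance.} Regard $\widetilde{R\otimes_S V}$ as an $S$-space by restricting its $R$-action to $S$. Take $(x,y)\in S$ and $w\in \widetilde{R\otimes_S V}^y$. Since $\sigma(x,y)(1)=1$ when $(x,y)\in S$ (because $f_1=1$), the action formula gives
\[
(x,y)\bigl((y,y)\otimes e\bigr)=(x,x)\otimes (x,y)e, \qquad e\in E^y.
\]
Hence $(x,y)$ maps $1\otimes E^y$ to $1\otimes E^x$, and under $1\otimes E\simeq E$ it acts on this slice exactly as $(x,y)$ acts on $E$. Consequently
\[
((x,y)w)_{|1\otimes E^x}=(x,y)\bigl(w_{|1\otimes E^y}\bigr),
\]
that is, $\rho^x((x,y)w)=(x,y)\rho^y(w)$. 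The map $\rho$ is Borel because it is a restriction. It lands in $\widetilde V$ for two reasons. First, restriction preserves finiteness of symmetric differences. Second, $\rho(R\otimes_S v)=v$ for $v\in V$. So $\rho$ is an $S$-map.

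\textbf{Quasi-periodicity.} If $\widetilde V$ is $S$-quasi-periodic, Lemma \ref{L - qp  R  map } applied to the $S$-map $\rho$ shows that $\widetilde{R\otimes_S V}$ is a quasi-periodic $S$-space. Since $S$ has finite index in $R$, Lemma \ref{L - qp finite index} then upgrades this to quasi-periodicity of $\widetilde{R\otimes_S V}$ as an $R$-space.

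\textbf{Main obstacle.} The difficulty is bookkeeping rather than any deep step. We must check that $\widetilde{R\otimes_S V}$, and $\widetilde V$, are genuine Borel fields of countable sets, so that the earlier lemmas apply. Both are almost equality classes inside $(E^x,\ZI_2)$ and $((R\otimes_S E)^x,\ZI_2)$. Each fibre is countable: it consists of finite modifications of one representative in a countable set. Borelness should follow from the Lusin--Novikov parametrisation of finite subsets of the countable fields, translated from $R\otimes_S v$ and $v$ respectively.

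We should also confirm that the $R$-action on $\widetilde{R\otimes_S V}$ is the natural one, $(rw)^x(g)=w^y(r^{-1}g)$. With this action, the displayed identity is literally the computation used in Lemma \ref{L - induction stable ae class}, restricted to the index $i=1$.
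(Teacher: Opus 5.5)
Your proposal is correct and is essentially the paper's proof: you check equivariance on the slice $1\otimes E$ using $f_1=1$ (so that $\sigma(x,y)(1)=1$ for $(x,y)\in S$, and $S$ acts on this slice exactly as on $E$), then invoke Lemma \ref{L - qp  R  map } for $S$-quasi-periodicity and Lemma \ref{L - qp finite index} to pass to $R$. Your computation is a bit more explicit than the paper's one-line verification, and your remarks on the countability and Borelness of the fibres of the almost equality classes spell out what the paper takes for granted.
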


\begin{proof}
Let $(x,y)\in S$, $w\colon R\otimes_SE\to \ZI_2$ be almost equal to an element of $R\otimes_S V$, and $1\otimes e\in (1\otimes E)^x$. Since $f_1=1$ we have
\[
(y,x)(x,f_1(x))=(y,f_1(y))(y,x)=(y,x)\in S
\]
and therefore:
\begin{align*}
\rho((x,y)w^y)(1\otimes e)=w^y((y,x)(1\otimes e))=((x,y)w^y)(1\otimes e)=((x,y)\rho(w)^y)(1\otimes e).
\end{align*}
This proves that $\rho$ is an $S$-map.

If furthermore $\widetilde V$ is quasi-periodic then, since $\rho$ is an $S$-map, it follows by Lemma \ref{L - qp  R  map } that the space $ \widetilde{R\otimes_S V}$ is a quasi-periodic $S$-space. Since $S$ has finite index in $R$, it follows by Lemma \ref{L - qp finite index} that $ \widetilde{R\otimes_S V}$ is a quasi-periodic $R$-space.
\end{proof}

Suppose now that $S$ is a treeable equivalence relation.  Let $K_S\subset S$ be a treeing of $S$ and let $K_S^{*p}$ denote the set of non-backtracking paths of length $p$. If $S$ has infinite classes, then the natural projection $K_S^{*p}\to X$ onto the first coordinate is essentially surjective; otherwise, we adjoin to $K_S^p$ the element $(x,x)$ for any $x$ not in the image. Let us fix for every $p\geq 1$ a  countable set $L_p$ consisting of sections of $K_S^{*p}$ whose union of graphs coincide with $K_S^{*p}$. We also let $L_0:=\{1\}$ where $1$ is the identity map of $X$. Thus, if $k\in L_p$ then by definition, $(x,k(x))$ is a vertex of the Cayley tree $T$ of $S$ relative to $K_S$ at distance at most $p$ from $(x,x)$.  For every $p\geq 0$ and every $k\in L_n$ we let $v_{p,k}(x)$ denote the set of edges of $T^x$ which point to $(x,k(x))$. Let $V=\{v_{p,k} : p\geq 1, k\in L_n\}$. This is a subset of $\G(ET,\ZI_2)$.

\begin{lemma} \label{L - induction ae class} 
Suppose that $S$ is treeable of  index $n<\infty$ in  $R$. 
 Then $R\otimes_S V$ sits in a uniform $R$-stable  almost equality class. \end{lemma}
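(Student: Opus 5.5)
The plan is to reduce the statement to Lemma \ref{L - induction stable ae class}. By that lemma it suffices to show that $V$ sits in a uniform $S$-stable almost equality class in $\G(ET,\ZI_2)$, and to bound the defect of $R$-stability uniformly. The lemma then transports uniform almost equality with the constant multiplied by $n$. For $R$-stability, its displayed formula expresses $|(x,y)(R\otimes_S v)^y\td (R\otimes_S v)^x|$ as a sum of $n$ terms. Each term compares $v$ at two $S$-equivalent points $f_i(x)$ and $f_{\sigma(x,y)(i)}(y)$, transported by the arrow joining them.

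\textbf{Almost equality of $V$.} Fix $x$ and work in the Cayley tree $T^x$, whose vertices are $S(x)$ identified with $\{(x,z)\}$. For a vertex $a$, let $v_a$ be the set of (oriented) edges pointing toward $a$. Then $v_a\td v_b$ is exactly the set of edges on the geodesic $[a,b]$, counted with both orientations. So $|v_a\td v_b|=2d(a,b)$, or $d(a,b)$ depending on the edge convention. For $v_{p,k}$ and $v_{q,l}$ this is at most $2(p+q)$, since both target vertices lie within distance $p$ and $q$ of the root $(x,x)$. This gives almost equality, but only a uniform bound depending on the pair. That is exactly what uniform almost equality in the sense of \ts\ref{S - notation} requires, since it is a pairwise condition $\|u\td v\|_\infty<\infty$.

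\textbf{$S$-stability.} For $(x,y)\in S$, the action identifies $T^y$ with $T^x$, as both are the Cayley tree of the same class. Under this identification the root $(y,y)$ goes to the vertex $(x,y)$. So $(x,y)v_{p,k}^y$ is $v_b$ with $b=(x,k(y))$, and comparing with $v_{p,k}^x=v_{(x,k(x))}$ gives size about $2\,d\bigl((x,k(y)),(x,k(x))\bigr)\le 2(2p+d_{K_S}(x,y))$. This is finite for each $(x,y)\in S$, so $V$ is $S$-stable. Plugging into the formula of Lemma \ref{L - induction stable ae class} gives, for each $(x,y)\in R$, the bound
\[
\sum_{i=1}^n 2\bigl(2p+d_{K_S}(f_i(x),f_{\sigma(x,y)(i)}(y))\bigr),
\]
which is finite because $(f_i(x),f_{\sigma(x,y)(i)}(y))\in S$. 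Hence $R\otimes_S V$ sits in an $R$-stable almost equality class, and pairwise uniformly.

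\textbf{Main obstacle.} The delicate point is bookkeeping rather than estimates. First, I must check that the action of $S$ on $ET$ really sends ``edges pointing to $(y,k(y))$'' to ``edges pointing to $(x,k(y))$''. This is the right-versus-left action convention on the Cayley tree. Second, I must handle the degenerate case of finite classes, where $(x,x)$ was adjoined. Here I would verify directly from the definition of the action on $T$ that the orientation-toward-a-vertex structure is equivariant. Once that holds, everything reduces to the identity $|v_a\td v_b|=2d(a,b)$ in a tree.
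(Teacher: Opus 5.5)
Your proposal is correct and follows the paper's proof. You get uniform almost equality of $V$ by bounding $|v_{p,k}\td v_{q,l}|$ by the tree distance between the two target vertices (at most $p+q$, up to your factor $2$ for doubled orientations), and then Lemma \ref{L - induction stable ae class} transfers both uniformity and stability to $R\otimes_S V$. The only difference is that you explicitly verify $S$-stability of $V$, by identifying $(x,y)v_{p,k}^y$ with the edges pointing to $(x,k(y))$ and so bounding the relevant tree distance by $2p+d_{K_S}(x,y)$, whereas the paper simply asserts that $V$ sits in an $S$-stable class.
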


\begin{proof} We note first  that $V$ sits in a uniform almost equality class. Indeed, if $v_{p,k}$ and $v_{q,l}$ are two elements in $V$, and $d^x$ refers to the distance at $x$ in the Cayley tree of $S$ relative to $K_S$, then 
\[
|v_{p,k}(x)\td v_{0,1}(x)| \leq d^x((x,kx),(x,x))\leq p \text{ and } |v_{q,l}\td v_{0,1}|\leq d^x((x,lx),(x,x))\leq q.
\] 
Thus, $\|v_{p,k}\td v_{q,l}\|_\infty\leq p+q$.  

 It follows by Lemma \ref{L - induction stable ae class} that $R\otimes_S V$ sits in a uniform  almost equality class.  Namely, $|(R\otimes_S v_{p,k})^x\bigtriangledown(R\otimes_S v_{q,l})^x|$ is bounded by:
\begin{align*}
|(R\otimes_S v_{p,k})^x\bigtriangledown(R\otimes_S v_{q,l})^x|&=\sum_{i=1}^n \sum_{e\in E^{f_i(x)}}(R\otimes_S v_{p,k}((x,f_i(x))e)-R\otimes_S v_{q,l}((x,f_i(x))e))\\
&=\sum_{i=1}^n |(v_{p,k}\mid E^{f_i(x)})\bigtriangledown (v_{q,l}\mid E^{f_i(x)})|\\
&=n(p+q)<\infty.
\end{align*}
Furthermore, since $V$ sits in an $S$-stable almost equality class, $R\otimes_S V$ sits in an $R$-stable almost equality class.  
\end{proof}

\begin{proof}[Proof of Theorem \ref{T - finite index stable almost equality classes}]
Lemma \ref{L - induction stable ae class} shows that $R\otimes_S V$ sits in a stable almost equality class $ \widetilde{R\otimes_S V}$ in $(R\otimes_S ET,\ZI_2)$. It remains  to prove that $ \widetilde{R\otimes_S V}$ is a quasi-periodic $R$-set. We prove that  $\widetilde{V}$ is a quasi-periodic $S$-space. The result will then follow by Lemma \ref{L - f is S equivariant}. 

 Let $\tilde v\in \G(\widetilde V)$ sit in the uniform almost equality class of $V$. Take $v\in V$ and a uniformly bounded element $a\in \G(ET,\ZI_2)$ such that $\tilde v= v+a$. For every $(x,y)\in S_{\tilde v}$ we have
\[
(x,y)v^y-v^x=(x,y)a^y-a^x.
\]
Let $N$ be an upper bound on $|a^x|$. Then, 
\[
|(x,y)v^y-v^x|\leq 2N.
\]
This shows that the distance between $v^x$ and $(x,y)v^y$, viewed as elements of $VT^x$, is at most $2N$ for every $(x,y)\in S_{\tilde v}$. Consider the ball $B^x$ of smallest radius in the tree $T^x$ which contains the set 
\[
\{(x,y)v^y : (x,y)\in S_{\tilde v}\}.
\]
Clearly, $B$ is a field of subsets of $VT$ which is invariant under $S_{\tilde v}$. Since $S_{\tilde v}$ acts isometrically, the barycenter of $B$ forms a field of points which is invariant under $S_{\tilde v}$. These points might be vertices or belong to edges in $T^x$. In both cases, we may find a section $\tilde v_0\in \G(VT)$ which is $S_{\tilde v}$-invariant. 

Consider now the map
\begin{align*}
f_{\tilde v} \colon S\tilde v &\to S v_0\\ 
(x,y)\tilde v^y&\mapsto (x,y)v_0^y.
\end{align*}
Note that this map is a well-defined Borel map since if $(x,y)\tilde v^y=(x,z)\tilde v^z$ then $(y,z)\in S_{\tilde v}$ so $(x,y)\tilde v_0^y=(x,z)\tilde v_0^z$. Furthermore, $f_{\tilde v}$ is an $S$-map by definition. Since $VT$ is $S$-quasi-periodic, so is $S\tilde v$.  Lemma \ref{L - qp sections} implies that $\tilde V$ is an $S$-quasi-periodic space. 
\end{proof}

\end{document}